\documentclass[review,3p,11pt,square,sort]{elsarticle}

\usepackage{amssymb}
\usepackage{amsmath}
\usepackage{amsthm}
\usepackage{stmaryrd}
\usepackage{bbm}
\usepackage{enumitem}

\geometry{top=1in,bottom=1in,left=1in,right=1in}

\usepackage{fancyhdr}
\usepackage{hyperref}
\hypersetup{%
  pdftitle={BSDEs},%
  pdfsubject={BSDEs},%
  pdfauthor={ShengJun FAN},%
  pdfkeywords={BSDEs},%
  pdfstartview=FitH,%
  CJKbookmarks=true,%
  bookmarksnumbered=true,%
  bookmarksopen=true,%
  colorlinks=true, linkcolor=blue, urlcolor=blue, citecolor=blue, %
}

\usepackage{cleveref}
\crefname{thm}{Theorem}{Theorems}
\crefname{pro}{Proposition}{Propositions}
\crefname{lem}{Lemma}{Lemmas}
\crefname{rmk}{Remark}{Remarks}
\crefname{cor}{Corollary}{Corollaries}
\crefname{dfn}{Definition}{Definitions}
\crefname{ex}{Example}{Examples}
\crefname{section}{Section}{Sections}
\crefname{subsection}{Subsection}{Subsections}

\newcommand{\eps}{\varepsilon}

\newcommand{\To}{\rightarrow}
\newcommand{\as}{{\rm d}\mathbb{P}\times{\rm d} t-a.e.}

\newcommand{\ps}{\mathbb{P}-a.s.}

\newcommand{\F}{\mathcal{F}}
\newcommand{\E}{\mathbb{E}}

\newcommand{\s}{\mathcal{S}}
\newcommand{\M}{{\rm M}}
\newcommand{\hcal}{\mathcal{H}}

\newcommand{\T}{[0,T]}
\newcommand{\LT}{{\mathbb L}^2(\F_T; \R^k)}

\newcommand{\R}{{\mathbb R}}

\newcommand {\Dis}{\displaystyle}

\newtheorem{thm}{Theorem}[section]

\newtheorem{pro}[thm]{Proposition}
\newtheorem{rmk}[thm]{Remark}
\newtheorem{cor}[thm]{Corollary}

\newtheorem{ex}[thm]{Example}


\begin{document}
\begin{frontmatter}

\title{{\boldmath\bf General time interval multidimensional BSDEs with
generators satisfying a weak stochastic-monotonicity\vspace{0.2cm} condition}\tnoteref{found}}
\tnotetext[found]{Fan's research is partially supported by the
the Fundamental Research Funds for the Central Universities (No.\,2017XKZD11).\vspace{0.2cm}}


\author{Tingting Li, Ziheng Xu}
\author{Shengjun Fan\vspace{0.3cm}\corref{cor1}}%
\ead{f\_s\_j@126.com;\ shengjunfan@cumt.edu.cn}
\cortext[cor1]{Corresponding author\vspace{0.1cm}}

\address{School of Mathematics, China University of Mining and Technology, Xuzhou, Jiangsu, 221116, PR China\vspace{-1.0cm}}

\begin{abstract}
This paper establishes an existence and uniqueness result for the adapted solution of a general time interval multidimensional backward stochastic differential equation (BSDE), where the generator $g$ satisfies a weak stochastic-monotonicity condition and a general growth condition in the state variable $y$, and a stochastic-Lipschitz condition in the state variable $z$. This unifies and strengthens some known works. In order to prove this result, we develop some ideas and techniques employed in \citet{XiaoFan2017Stochastics} and \citet{LiuLiFan2019CAM}. In particular, we put forward and prove a stochastic Gronwall-type inequality and a stochastic Bihari-type inequality, which generalize the classical ones and may be useful in other places. The martingale representation theorem, It\^{o}'s formula  and the BMO martingale tool are used to prove these two inequalities.\vspace{0.2cm}
\end{abstract}

\begin{keyword}
Backward stochastic differential equation \sep General time interval \sep \\
\hspace{2cm} Weak stochastic-monotonicity condition \sep Existence and uniqueness\sep\\
\hspace{2cm}  Stochastic Gronwall-type inequality \sep Stochastic Bihari-type inequality \vspace{0.2cm}

\MSC[2010] 60H10\vspace{0.2cm}
\end{keyword}

\end{frontmatter}
\vspace{-0.3cm}

\section{Introduction}
\label{sec:1-Introduction}
\setcounter{equation}{0}

Let us fix two positive integers $k$ and $d$, a finite or infinite terminal time $T$ satisfying $0<T\leq\infty$, and a $d$-dimensional standard Brownian motion $(B_t)_{t\in\T}$ on a completed and filtrated probability space $(\Omega,\F, (\F_t)_{t\in\T}, \mathbb{P})$, where $(\F_t)_{t\in\T}$ is the natural, right-continuous and completed $\sigma$-algebra filtration generated by the $B_\cdot$ We consider the following multidimensional backward stochastic differential equations (BSDEs for short):
\begin{equation}\label{eq:1}
  y_t=\xi+\int_t^T g(s,y_s,z_s){\rm d}s-\int_t^T z_s {\rm d}B_s, \ \ t\in\T,
\end{equation}
where $\xi$ is an $\F_T$-measurable $\R^k$-valued random vector called the terminal condition, the stochastic function
$$g(\omega, t, y, z):\Omega\times\T\times\R^k\times\R^{k\times d}\mapsto\R^k$$
is $(\F_t)$-progressively measurable for each $(y,z)$ called the generator, and the pair of processes $(y_t,z_t)_{t\in\T}\in \R^k\times\R^{k\times d}$ is $(\F_t)$-progressively measurable called the solution of equation \eqref{eq:1}, denoted usually by BSDE$(\xi,T,g)$.\vspace{0.2cm}

It is well known that linear BSDEs were initially introduced by \citet{Bismut1973JMAA} for solving the optimal control problem, and nonlinear BSDEs were first investigated by \citet{PardouxPeng1990SCL}. An existence and uniqueness result for the solution of a finite time interval multidimensional BSDE was at the first time established in \cite{PardouxPeng1990SCL} under a uniform Lipschitz condition of the generator $g$ in the state variables $(y,z)$. Here and hereafter, without special illustration, the word ``uniform" means that the constant in the (Lipschitz) condition is uniform in the two state variables $t$ and $\omega$ of the generator $g$, i.e, the constant is independent of $(t,\omega)$. Furthermore, \citet{Mao1995SPA}, \citet{Pardoux1999NADEC} and \citet{FanJiangDavison2013FMC} respectively weakened the uniform Lipschitz condition in $y$ to a uniform non-Lipschitz condition, a uniform monotonicity condition and a uniform Osgood condition. And, \citet{FanJiang2013AMSE} (see also \citet{Fan2015JMAA}) unified these conditions and established an existence and uniqueness result for the solution of a finite time interval multidimensional BSDE, where the generator $g$ satisfies a uniform weak monotonicity condition with a general growth condition in $y$ and the uniform Lipschitz condition in $z$. Up to now, BSDEs have been successfully applied to many various areas such as PDEs, mathematical finance, optimal control and so on, see, for example, \citet{ElKarouiPengQuenez1997MF}, \citet{Morlais2009FS} and \citet{Peng2004LNM} for details.\vspace{0.2cm}

To the best of our knowledge, \citet{ChenWang2000JAMSA} first investigated infinite time interval BSDEs, put forward a non-uniform (in $t$) Lipschitz condition of the generator $g$ in $(y,z)$, and established the existence and uniqueness for solutions of the BSDEs. Recently, \citet{Morlais2009FS} and \citet{XiaoFan2017Stochastics} respectively relaxed the non-uniform (in $t$) Lipschitz condition of $g$ in $y$ to a non-uniform (in $t$) monotonicity condition and a non-uniform (in $t$) weak monotonicity condition, see also \citet{Fan2016SPA} for more details. Very recently, \citet{LiuLiFan2019CAM} established an existence and uniqueness result for the solution of a general time interval multidimensional BSDE under a non-uniform (in both $t$ and $\omega$) Lipschitz condition of the generator $g$ in $(y,z)$, called the stochastic-Lipschitz condition. The stochastic-Lipschitz condition of $g$ in $y$ was further weakened to a non-uniform (in both $t$ and $\omega$) monotonicity condition (called the stochastic-monotonicity condition) in \citet{LuoFan2018SD} for one-dimensional BSDEs. Readers are refereed to \citet{ElKarouiHuang1997NMIF}, \citet{BenderKohlmann2000}, \citet{WangRanChen2007JAMSA} and \citet{BriandConfortola2008SPA} for another kind of stochastic conditions on the generator $g$, in which some stronger integrability assumptions on the generator and the terminal condition as well as the solutions are required. \vspace{0.2cm}

In this paper, we prove an existence and uniqueness result for the solution of a general time interval multidimensional BSDE, where the generator $g$ satisfies a weak stochastic-monotonicity condition with a general growth condition in $y$, and the stochastic-Lipschitz condition in $z$, see \cref{thm:MainResult} in \cref{sec:4-Existence and uniqueness} for details. Since the assumptions \ref{H:WeakMonotonicityInY} and \ref{H:LipschitzInZ} used in \cref{thm:MainResult} are more general than those in the existing works (see \cref{rmk:4.1} in \cref{sec:4-Existence and uniqueness}), it strengthenes some corresponding works mentioned in the last two paragraphs including Theorem 3.1 in \citet{LiuLiFan2019CAM} and Theorem 6 in \citet{XiaoFan2017Stochastics} for the case of the finite variation process $V_\cdot\equiv 0$, and some new and intrinsic difficulties arise naturally when proving it, see \cref{rmk:4.5} in \cref{sec:4-Existence and uniqueness} for more details.  In order to prove \cref{thm:MainResult}, we put forward and prove a stochastic Gronwall-type inequality and a stochastic Bihari-type inequality by virtue of the martingale representation theorem, It\^{o}'s formula and the BMO (bounded in mean oscillation) martingale tool, see \cref{pro:Gronwall} and \cref{pro:Bihari} in \cref{sec:3-Stochastic inequalities} for more details. These two stochasitc inequalities generalize the classical ones, and may be useful in some other places. Based on these two inequalities and some similar computations employed in \cite{XiaoFan2017Stochastics} and \cite{LiuLiFan2019CAM}, by dividing the time interval $\T$ into a finite number of subintervals with stopping time ends we successfully overcome the arising difficulties in our framework and give the proof of \cref{thm:MainResult}. As a corollary of \cref{thm:MainResult}, we also prove a general existence and uniqueness result for the solution of a multidimensional BSDE with general stopping time interval, see \cref{cor:MainResult} in  \cref{sec:4-Existence and uniqueness} for details. Finally, we give an example (see \cref{ex:L2UnboundedParameters} in  \cref{sec:4-Existence and uniqueness}) in which \cref{thm:MainResult} and \cref{cor:MainResult} can be applied, but any other known results can not be applied.\vspace{0.2cm}

The rest of the paper is organized as follows. In the next section, we introduce some notations which will be used later. The stochastic Gronwall-type and Bihari-type inequalities are stated and proved in \cref{sec:3-Stochastic inequalities}. And, the existence and uniqueness results on the BSDEs are stated and proved in \cref{sec:4-Existence and uniqueness} and \cref{sec:5-proof of existence and uniqueness result}, respectively.

\section{Notations}
\label{sec:2-Notations}

In this section, we introduce some notations used later. First, denote the interval $[0,+\infty)$ by $\R_+$, the Euclidean norm of $y\in \R^{n}$ by $|y|$ for each $n\geq 1$, and the indicator function of $A$ by ${\bf 1}_A$ for each set $A$. Then, let $\LT$ be the set of all $\F_T$-measurable $\R^k$-valued random vectors $\xi$ satisfying $\E[|\xi|^2]<+\infty$, $\s^2(0,T; \R^k)$ the set of all $(\F_t)$-progressively measurable and continuous $\R^k$-valued processes $(Y_t)_{t\in\T}$ such that
$$
\|Y\|_{{\s}^2}:=\left(\E\left[\sup_{t\in\T} |Y_t|^2\right]\right)^{1/2}<+\infty,\vspace{0.2cm}
$$
and $\M^2(0,T; \R^{k\times d})$ the set of all $(\F_t)$-progressively measurable $\R^{k\times d}$-valued processes $(Z_t)_{t\in\T}$ satisfying
$$
\|Z\|_{\M^2}:=\left\{ \E\left[\int_0^T |Z_t|^2{\rm d}t\right] \right\}^{1/2}<+\infty,\vspace{0.2cm}
$$
and $\hcal^2(0,T; \R^k)$ the set of all $(\F_t)$-progressively measurable $\R^k$-valued processes $(X_t)_{t\in\T}$ such that
$$
\|X\|_{\hcal^2}:=\left\{\E\left[\left(\int_0^T |X_t|{\rm d}t\right)^2\right]\right\}^{1/2}<+\infty.\vspace{0.3cm}
$$

Furthermore, for a local $\R^k$-valued or real-valued martingale $\int_0^\cdot {z_s {\rm d}B_s}$, we say that it is a martingale of bounded mean oscillation (${\rm BMO}$-martingale in short) means that\vspace{0.2cm}
$$
\sup\limits_{\tau\in\Sigma_T}\bigg\| \E\bigg[\int_\tau^T{|z_s|^2{\rm d}s}\bigg|\F_\tau\bigg]\bigg\|_\infty
<+\infty,\vspace{0.2cm}
$$
here and hereafter, $\Sigma_T$ denotes the set of all $(\F_t)$-stopping times $\tau$ valued in $\T$, and $\|\xi\|_\infty$ the infinity norm of the essentially bounded real-valued random variable $\xi$, i.e.,
$$
\|\xi\|_\infty:=\sup\{x\in\R_+:\mathbb{P}(|\xi|>x)>0\}.
$$
And, we use $L^\infty(\Omega; L^1(\T;\R_+))$ and $L^\infty(\Omega; L^2(\T;\R_+))$ to denote, respectively, the set of all $(\F_t)$-progressively measurable nonnegative real-valued processes 
$$
u_t(\omega),\ v_t(\omega):\Omega\times[0, T]\mapsto\R_+\vspace{-0.2cm}
$$ 
satisfying
$$
  \bigg\|\int_0^T{u_s(\omega) {\rm d}s}\bigg\|_\infty<+\infty\ \ {\rm and}\ \ \bigg\|\int_0^T{v^2_s(\omega) {\rm d}s}\bigg\|_\infty<+\infty.\vspace{0.4cm}
$$

Finally, denote by ${\bf S}$ the set of non-decreasing continuous functions $\rho(x):\R_+\To \R_+$ satisfying the following conditions:
\begin{itemize}
\item [(i)] $\rho(0)=0$, $\rho(x)>0$ for each $x>0$, and $\int_{0^+}{{\rm d}x\over \rho(x)}:=\lim\limits_{\epsilon\To 0^+}\int_0^\epsilon {{\rm d}x\over \rho(x)}=+\infty$;
\item [(ii)] There exists a constant $A\geq 1$ such that $0\leq \rho(x)\leq A(1+x)$ for each $x\geq 0$;
\item [(iii)] For each real $c>0$, the derivative function of $\rho$ on interval $[c,+\infty)$ is locally bounded, i.e., there exists a constant $M_c>0$ depending only on $c$ such that $0\leq \rho'(x)\leq M_c$ for each $x\in [c,+\infty)$.
\end{itemize}
We remark that if $\rho(x):\R_+\To \R_+$ is a non-decreasing, concave and derivative function satisfying condition (i), then $\rho(\cdot)\in {\bf S}$ since conditions (ii) and (iii) hold automatically in this case. However, we also remark that a function 
$\rho(\cdot)$ belonging to ${\bf S}$ is not necessarily concave.

\section{Stochastic Gronwall-type and Bihari-type inequalities}
\label{sec:3-Stochastic inequalities}

In this section, we will put forward and prove a stochastic Gronwall-type inequality and a stochastic Bihari-type inequality, which respectively generalize the classical ones. Classical proof methods seem to be not applicable in the stochastic framework, and it is interesting that the martingale theorem, It\^{o}'s formula and the BMO martingale tool play a crucial role in our proof of these two inequalities. These two inequalities will be employed in \cref{sec:5-proof of existence and uniqueness result} to prove the existence and uniqueness result for the solution of BSDEs with generators satisfying a weak stochastic-monotonicity condition. We believe that they would be useful in some other places. \vspace{0.2cm}

The following proposition extends the classical Gronwall inequality to a stochastic version. It also generalizes Theorem 1 in \citet{WangFan2019JIA}, which states that in the case of $f_\cdot\equiv 0$ and $\eta\equiv c$ for a constant $c\geq 0$, if \eqref{eq:2} is satisfied for each $t\in\T$, then \eqref{eq:3} holds for each $t\in\T$.

\begin{pro}\label{pro:Gronwall} (Stochastic Gronwall-type inequality)
Assume that $0<T\leq\infty$, $\mu_\cdot$ is an $(\F_t)$-progressively measurable, continuous and nonnegative real-valued process satisfying
$$\E\left[\sup_{t\in[0,T]}\mu_t(\omega)\right]<+\infty,$$
$\eta$ is an $\F_T$-measurable nonnegative real-valued random variable satisfying $\E[\eta]<+\infty$,  $\beta_\cdot$ is a process belonging to $L^\infty(\Omega; L^1(\T;\R_+))$, and both $f_\cdot$ and $h_\cdot$ are $(\F_t)$-progressively measurable nonnegative real-valued processes satisfying
$$
\E\left[\int_0^T[f_t(\omega)+h_t(\omega)] {\rm d}t\right]<+\infty.\vspace{0.2cm}
$$

If for each $t\in\T$, it holds that
\begin{equation}\label{eq:2}
\mu_t \leq \E\left[\eta+\int_t^T(\beta_s\mu_s+f_s){\rm d}s\bigg|\F_t\right],\ \ \ps,
\end{equation}
then for each $t\in\T$, we have
\begin{equation}\label{eq:3}
\mu_t \leq {\rm e}^{\left\|\int_t^T\beta_s {\rm d}s\right\|_\infty} \E\left[\eta+ \int_t^T f_s {\rm d}s \bigg|\F_t\right],\ \ \ps\vspace{0.2cm}
\end{equation}

Moreover, if for each $t\in \T$, it holds that
\begin{equation}\label{eq:4}
\E\left[\sup\limits_{s\in [t,T]}\mu_s+\int_t^Th_s{\rm d}s\bigg|\F_t\right] \leq \E\left[\eta+\int_t^T(\beta_s\mu_s+f_s){\rm d}s\bigg|\F_t\right],\ \ \ps,
\end{equation}
then for each $t\in \T$, we have
\begin{equation}\label{eq:5}
\mu_t\leq \E\left[\sup\limits_{s\in [t,T]}\mu_s+\int_t^Th_s{\rm d}s\bigg|\F_t\right] \leq {\rm e}^{\left\|\int_t^T\beta_s {\rm d}s\right\|_\infty} \E\left[\eta+ \int_t^T f_s {\rm d}s \bigg|\F_t\right],\ \ \ps\vspace{0.3cm}
\end{equation}
\end{pro}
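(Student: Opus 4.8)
The plan is to replace the conditional-expectation right-hand side of \eqref{eq:2} by a single It\^o process supplied by the martingale representation theorem, to run a linear integrating-factor argument built on the essentially bounded increasing process $t\mapsto\int_0^t\beta_s\,{\rm d}s$, and to read \eqref{eq:3} off from a resulting submartingale inequality. Concretely, set $\zeta:=\eta+\int_0^T(\beta_s\mu_s+f_s)\,{\rm d}s$; the hypotheses give $\E[\zeta]<\infty$ (note $\int_0^T\beta_s\mu_s\,{\rm d}s\le\|\int_0^T\beta_s\,{\rm d}s\|_\infty\sup_t\mu_t$), so $N_t:=\E[\zeta\,|\,\F_t]$ is a uniformly integrable continuous martingale, and the martingale representation theorem gives $N_t=N_0+\int_0^tn_s\,{\rm d}B_s$ for some $(\F_t)$-progressively measurable $n_\cdot$. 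Putting
$$
X_t:=\E\Big[\eta+\int_t^T(\beta_s\mu_s+f_s)\,{\rm d}s\ \Big|\ \F_t\Big]=N_t-\int_0^t(\beta_s\mu_s+f_s)\,{\rm d}s ,
$$
we see that $X_\cdot$ is a continuous semimartingale with ${\rm d}X_t=-(\beta_t\mu_t+f_t)\,{\rm d}t+n_t\,{\rm d}B_t$ and $X_T=\eta$, and that \eqref{eq:2} says exactly $0\le\mu_t\le X_t$ for all $t\in\T$, $\ps$. With $\Gamma_t:=\exp(\int_0^t\beta_s\,{\rm d}s)$, so that $1\le\Gamma_t\le L:={\rm e}^{\|\int_0^T\beta_s\,{\rm d}s\|_\infty}<\infty$ and ${\rm d}\Gamma_t=\beta_t\Gamma_t\,{\rm d}t$, It\^o's formula yields
$$
{\rm d}\Big(\Gamma_tX_t+\int_0^t\Gamma_sf_s\,{\rm d}s\Big)=\beta_t\Gamma_t(X_t-\mu_t)\,{\rm d}t+\Gamma_tn_t\,{\rm d}B_t\ \ge\ \Gamma_tn_t\,{\rm d}B_t ,
$$
because $X_t\ge\mu_t$ and $\beta_t,\Gamma_t\ge0$. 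Hence $Y_t:=\Gamma_tX_t+\int_0^t\Gamma_sf_s\,{\rm d}s\ge0$ is the sum of the nonnegative nondecreasing process $X_0+\int_0^t\beta_s\Gamma_s(X_s-\mu_s)\,{\rm d}s$ and the stochastic integral $\int_0^\cdot\Gamma_sn_s\,{\rm d}B_s$.

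The step I expect to be the main obstacle is upgrading this to the submartingale inequality $Y_t\le\E[Y_T\,|\,\F_t]$, since $\zeta$ is only integrable and neither $\E[\int_0^T|n_s|^2\,{\rm d}s]$ nor $\E[\sup_tX_t]$ need be finite, so $\int_0^\cdot\Gamma_sn_s\,{\rm d}B_s$ is a priori only a local martingale. I would handle it by localization combined with a uniform-integrability argument (alternatively the BMO-martingale tool can be invoked at this point). Fix $t\in[0,T)$ and set $\rho_p:=\inf\{u\ge t:\int_t^u\Gamma_s^2|n_s|^2\,{\rm d}s\ge p\}\wedge T$, which increases to $T$ since $\int_t^T|n_s|^2\,{\rm d}s<\infty$ $\ps$ (as $N$ is uniformly integrable). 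Then $\int_t^{\cdot\wedge\rho_p}\Gamma_sn_s\,{\rm d}B_s$ is a genuine ($L^2$-bounded) martingale, so conditioning in $Y_{\rho_p}=Y_t+\int_t^{\rho_p}\beta_s\Gamma_s(X_s-\mu_s)\,{\rm d}s+\int_t^{\rho_p}\Gamma_sn_s\,{\rm d}B_s$ and discarding the nonnegative drift gives $Y_t\le\E[Y_{\rho_p}\,|\,\F_t]$. To let $p\to\infty$ I would use that $0\le Y_{\rho_p}\le L\,\E[\zeta\,|\,\F_{\rho_p}]+L\int_0^Tf_s\,{\rm d}s$ (from $X_{\rho_p}\le N_{\rho_p}=\E[\zeta\,|\,\F_{\rho_p}]$ and $\Gamma\le L$), together with the uniform integrability of the family $\{\E[\zeta\,|\,\F_\sigma]:\sigma\in\Sigma_T\}$; then $(Y_{\rho_p})_p$ is uniformly integrable and converges $\ps$ to $Y_T=\Gamma_T\eta+\int_0^T\Gamma_sf_s\,{\rm d}s$, whence $\E[Y_{\rho_p}\,|\,\F_t]\to\E[Y_T\,|\,\F_t]$ and therefore, after moving the $\F_t$-measurable term $\int_0^t\Gamma_sf_s\,{\rm d}s$ to the other side,
$$
\Gamma_tX_t\ \le\ \E\Big[\Gamma_T\eta+\int_t^T\Gamma_sf_s\,{\rm d}s\ \Big|\ \F_t\Big],\qquad t\in\T,\ \ps .
$$

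Dividing by the positive $\F_t$-measurable factor $\Gamma_t$, using $\mu_t\le X_t$, and bounding $\Gamma_s/\Gamma_t=\exp(\int_t^s\beta_r\,{\rm d}r)\le\exp(\int_t^T\beta_r\,{\rm d}r)\le{\rm e}^{\|\int_t^T\beta_r\,{\rm d}r\|_\infty}$ for $t\le s\le T$ (the last being a deterministic constant, which may be pulled out of $\E[\,\cdot\,|\,\F_t]$) gives exactly \eqref{eq:3}.

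For the ``moreover'' part, observe that \eqref{eq:4} implies \eqref{eq:2}: since $t\in[t,T]$ and $h_\cdot\ge0$, $\mu_t=\E[\mu_t\,|\,\F_t]\le\E[\sup_{s\in[t,T]}\mu_s+\int_t^Th_s\,{\rm d}s\,|\,\F_t]$, which by \eqref{eq:4} is at most $X_t=\E[\eta+\int_t^T(\beta_s\mu_s+f_s)\,{\rm d}s\,|\,\F_t]$. Thus the argument above applies and yields $X_t\le{\rm e}^{\|\int_t^T\beta_s\,{\rm d}s\|_\infty}\E[\eta+\int_t^Tf_s\,{\rm d}s\,|\,\F_t]$. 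Combining the bounds, the leftmost inequality in \eqref{eq:5} is the trivial one just used, and the rightmost follows from
$$
\E\Big[\sup_{s\in[t,T]}\mu_s+\int_t^Th_s\,{\rm d}s\ \Big|\ \F_t\Big]\ \le\ X_t\ \le\ {\rm e}^{\|\int_t^T\beta_s\,{\rm d}s\|_\infty}\,\E\Big[\eta+\int_t^Tf_s\,{\rm d}s\ \Big|\ \F_t\Big],
$$
which is \eqref{eq:5}.
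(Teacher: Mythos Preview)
Your proof is correct and follows essentially the same route as the paper's: martingale representation of $\eta+\int_0^T(\beta_s\mu_s+f_s)\,{\rm d}s$, then It\^o's formula with the integrating factor $\exp(\int_0^\cdot\beta_s\,{\rm d}s)$ applied to the process $\bar\mu_\cdot$ (your $X_\cdot$), followed by conditioning on $\F_t$ and bounding $\Gamma_s/\Gamma_t$. The only difference is that the paper asserts directly that $\int_0^\cdot\Gamma_r z_r\,{\rm d}B_r$ is a true $(\F_t)$-martingale, whereas you justify the same step via localization and uniform integrability of $(Y_{\rho_p})_p$; this is a matter of technical care, not a different idea.
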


\begin{rmk}\label{rmk:3.2}
For simplicity of notations, here and hereafter the random processes $\mu_t(\omega)$, $\beta_t(\omega)$ and $f_t(\omega)$ are sometimes abbreviated as $\mu_t$, $\beta_t$ and $f_t$ respectively, and the $\ps$ is usually omitted without causing confusion. We will also adopt similar notations for other processes.
\end{rmk}

\begin{proof}[\bf Proof of \cref{pro:Gronwall}] We will develop the martingale representation method employed in \citet{WangFan2019JIA} to prove this proposition. Set
$$\bar \eta:=\eta+\int_0^T(\beta_s\mu_s+f_s){\rm d}s.$$
From the assumptions of $\eta$, $\beta_\cdot$, $\mu_\cdot$ and $f_\cdot$, it is clear that $\E[\bar\eta]<+\infty$. Then, by the martingale representation theorem (see Theorem 2.46 in \citet{PardouxRasscanu2014book}), there exists an $(\F_t)$-progressively measurable $\R^{1\times d}$-valued process $(z_t)_{t\in [0,T]}$ such that
\begin{equation}\label{eq:6}
\E[\bar\eta |\F_t]=\E[\bar\eta]+\int_0^t z_s {\rm d}B_s,\ \ t\in [0,T].\vspace{-0.2cm}
\end{equation}
Now, let
$$
\bar\mu_t:=\E\left[\eta+\int_t^T(\beta_s\mu_s+f_s) {\rm d}s\bigg|\F_t\right]=
\E\left[\bar\eta\big|\F_t\right]-\int_0^t(\beta_s\mu_s+f_s) {\rm d}s,\ \ t\in [0,T].\vspace{0.2cm}
$$
Then, $\bar\mu_\cdot$ is $(\F_t)$-progressively measurable and, in view of \eqref{eq:6},
$$
\bar\mu_t=\E[\bar\eta]-\int_0^t(\beta_s\mu_s+f_s){\rm d}s+\int_0^t z_s {\rm d}B_s,\ \ t\in[0,T].\vspace{0.1cm}
$$
It follows from It\^{o}'s formula together with the fact of $\mu_\cdot\leq \bar\mu_\cdot$ due to \eqref{eq:2} that\vspace{0.1cm}
\begin{equation}\label{eq:7}
\begin{array}{lll}
\Dis {\rm d}\left(\bar\mu_r{{\rm e}^{\int_0^r{\beta_s{\rm d}s}}}\right)&=& \Dis {\rm e}^{\int_0^r{\beta_s{\rm d}s}}\left[-(\beta_r\mu_r+f_r){\rm d}r+z_r {\rm d}B_r+\beta_r\bar\mu_r{\rm d}r\right]\vspace{0.2cm}\\
&\geq & \Dis {\rm e}^{\int_0^r{\beta_s{\rm d}s}}(-f_r{\rm d}r+z_r {\rm d}B_r),\ \ \ r\in\T.
\end{array}
\end{equation}
Note that the process
$$
\left(\int_0^t e^{\int_0^r\beta_s{\rm d}s} z_r {\rm d}B_r\right)_{t\in[0,T]}\vspace{0.2cm}
$$
is an $(\F_t)$-martingale. Integrating on the interval $[t,T]$ and taking the conditional expectation with respect to $\F_{t}$ on both sides of \eqref{eq:7}, we obtain that
$$
\E\left[\eta {\rm e}^{\int_0^T{\beta_s{\rm d}s}}-\bar\mu_t{{\rm e}^{\int_0^t{\beta_s{\rm d}s}}}\bigg|\F_t\right]\geq -\E\left[\int_t^T{\rm e}^{\int_0^r\beta_s{\rm d}s}f_r{\rm d}r\bigg|\F_t\right],\ \ t\in\T.
$$
Consequently, in view of the fact that $\bar\mu_t {\rm e}^{\int_0^t\beta_s{\rm d}s}$ is $\F_t$-measurable,
\begin{equation}\label{eq:8}
\bar\mu_t{\rm e}^{\int_0^t\beta_s{\rm d}s}\leq \E\left[\eta {\rm e}^{\int_0^T\beta_s{\rm d}s}+\int_t^T{\rm e}^{\int_0^r\beta_s{\rm d}s}f_r{\rm d}r\bigg|\F_t\right],\ \ t\in\T.
\end{equation}
Then, since $\mu_\cdot\leq \bar\mu_\cdot$, the desired inequality \eqref{eq:3} follows immediately from \eqref{eq:8}. \vspace{0.2cm}

Moreover, if \eqref{eq:4} is satisfied for each $t\in \T$, then it is clear that \eqref{eq:2} holds for each $t\in \T$. So, \eqref{eq:8} also holds. Thus, the desired inequality \eqref{eq:5} follows from inequalities \eqref{eq:8} and \eqref{eq:4} together with the definition of $\bar\mu_\cdot$. The proof is then complete.\vspace{0.2cm}
\end{proof}

The following \cref{pro:Bihari} generalizes the classical Bihari inequality to a stochastic version. We would like to mention that another stochastic Bihari-type inequality was established in Proposition 4.6 of \citet{DingWu1998SPA}, but it has a different form and a different proof from ours. In particular, Proposition 4.6 in \cite{DingWu1998SPA} can not be employed to prove our existence and uniqueness result on the BSDEs in \cref{sec:5-proof of existence and uniqueness result}, and \cref{pro:Bihari} also can not be derived from it.

\begin{pro}\label{pro:Bihari} (Stochastic Bihari-type inequality )
Assume that $c>0$, $0<T\leq\infty$, $\beta_\cdot\in L^\infty(\Omega; L^1(\T;\R_+))$ and $\rho(\cdot)\in {\bf S}$. Let $\mu_\cdot$ be an $(\F_t)$-progressively measurable, continuous and nonnegative real-valued process satisfying
$$
\E\left[\sup\limits_{t\in[0,T]}\mu_t(\omega)\right]<+\infty.\vspace{0.2cm}
$$

If for each $t\in\T$, it holds that
\begin{equation}\label{eq:9}
\mu_t \leq c+\E\left[\int_t^T\beta_s\rho(\mu_s){\rm d}s\bigg|\F_t\right],\ \ \ps,
\end{equation}
then for each $t\in\T$, we have
\begin{equation}\label{eq:10}
\mu_t \leq \Theta^{-1}\left(\Theta(c)+\left\|\int_t^T\beta_s {\rm d}s\right\|_\infty\right),\ \ \ps,
\end{equation}
where
$$
\Theta(x):=\int_1^x {1\over \rho(u)}\ {\rm d}u, \ \ x>0,\vspace{0.2cm}
$$
is a strictly increasing function valued in $\R$, and $\Theta^{-1}$ is the inverse function of $\Theta$.\vspace{0.2cm}

Moreover, if \eqref{eq:9} holds for $c=0$, then $\mu_t\equiv 0$ for each $t\in \T$.\vspace{0.2cm}
\end{pro}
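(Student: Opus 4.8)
\medskip
\noindent\textbf{Proof proposal.} The plan is to adapt the martingale-representation/It\^o-formula scheme from the proof of \cref{pro:Gronwall}, and to dispose of the second-order term produced by It\^o's formula in the nonlinear setting by means of a Girsanov change of measure. Write $B:=\big\|\int_0^T\beta_s\,{\rm d}s\big\|_\infty<+\infty$.

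First I would derive a crude \emph{deterministic} a priori bound. Since $\rho(x)\le A(1+x)$ by condition (ii) for ${\bf S}$, \eqref{eq:9} gives $\mu_t\le(c+AB)+\E[\jf A\beta_s\mu_s\,{\rm d}s|\F_t]$ for each $t\in\T$, whence \cref{pro:Gronwall} (applied with the constant $\eta:=c+AB$, the process $A\beta_\cdot$ in place of $\beta_\cdot$, and $f_\cdot\equiv0$) yields a deterministic constant $K:={\rm e}^{AB}(c+AB)$ with $\mu_t\le K$ for all $t\in\T$, $\ps$ Therefore $\bar\eta:=c+\int_0^T\beta_s\rho(\mu_s)\,{\rm d}s\le c+\rho(K)B$ is \emph{bounded}. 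By the martingale representation theorem, $\E[\bar\eta|\F_t]=\E[\bar\eta]+\int_0^tz_s\,{\rm d}B_s$ for some $(\F_t)$-progressively measurable $\R^{1\times d}$-valued $z_\cdot$; as $\int_0^\cdot z_s\,{\rm d}B_s=\E[\bar\eta|\F_\cdot]-\E[\bar\eta]$ is a bounded martingale, it is a BMO-martingale. Put
$$\bar\mu_t:=c+\E\Big[\jf\beta_s\rho(\mu_s)\,{\rm d}s\,\Big|\,\F_t\Big]=\E[\bar\eta|\F_t]-\int_0^t\beta_s\rho(\mu_s)\,{\rm d}s,\qquad t\in\T,$$
so that $\bar\mu_\cdot$ is continuous and adapted, $c\le\bar\mu_t\le\|\bar\eta\|_\infty$, \eqref{eq:9} gives $\mu_t\le\bar\mu_t$, and, with $\gamma_t:=\beta_t\rho(\mu_t)/\rho(\bar\mu_t)\in[0,\beta_t]$ (recall $\rho(\bar\mu_t)\ge\rho(c)>0$), one has ${\rm d}\bar\mu_t=-\gamma_t\rho(\bar\mu_t)\,{\rm d}t+z_t\,{\rm d}B_t$ with $\bar\mu_T=c$.

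Next, since $\rho$ is Lipschitz on $[c,\|\bar\eta\|_\infty]$ (condition (iii)) and bounded there away from $0$, the function $\Theta$ is of class $C^1$ on that interval with Lipschitz derivative $\Theta'=1/\rho$, so It\^o's formula (after a routine smoothing of $\rho$ on $[c,\|\bar\eta\|_\infty]$) applied to $\Theta(\bar\mu_\cdot)$ gives
$$\Theta(\bar\mu_t)=\Theta(c)+\jf\gamma_s\,{\rm d}s-\jf\frac{z_s}{\rho(\bar\mu_s)}\,{\rm d}B_s+\frac12\jf\frac{\rho'(\bar\mu_s)}{\rho(\bar\mu_s)^2}|z_s|^2\,{\rm d}s,\qquad t\in\T.$$
Now choose $\theta_s:=\tfrac12\,\rho'(\bar\mu_s)\rho(\bar\mu_s)^{-1}z_s$, which satisfies $|\theta_s|\le\tfrac{M_c}{2\rho(c)}|z_s|$, so that $\int_0^\cdot\theta_s\,{\rm d}B_s$ is again a BMO-martingale; then $\mathcal E(\int_0^\cdot\theta_s\,{\rm d}B_s)$ is a uniformly integrable martingale and $\tfrac{{\rm d}\Q}{{\rm d}\mathbb P}:=\mathcal E(\int_0^\cdot\theta_s\,{\rm d}B_s)_T$ defines a probability $\Q\sim\mathbb P$ under which $\tilde B_t:=B_t-\int_0^t\theta_s^{\!\top}\,{\rm d}s$ is a Brownian motion, and $\int_0^\cdot\rho(\bar\mu_s)^{-1}z_s\,{\rm d}\tilde B_s$ is a BMO$(\Q)$-martingale (BMO being stable under this transformation). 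Because $\theta$ was tuned so that $\rho(\bar\mu_s)^{-1}z_s\theta_s^{\!\top}\,{\rm d}s=\tfrac12\rho'(\bar\mu_s)\rho(\bar\mu_s)^{-2}|z_s|^2\,{\rm d}s$, the last identity reduces to $\Theta(\bar\mu_t)=\Theta(c)+\jf\gamma_s\,{\rm d}s-\jf\rho(\bar\mu_s)^{-1}z_s\,{\rm d}\tilde B_s$. Fixing $t\in\T$ and taking $\E^{\Q}[\,\cdot\,|\F_t]$ (the $\Q$-martingale term drops out, $\Theta(\bar\mu_t)$ is $\F_t$-measurable, $0\le\gamma_s\le\beta_s$, $\Q\sim\mathbb P$) gives $\Theta(\bar\mu_t)\le\Theta(c)+\|\int_t^T\beta_s\,{\rm d}s\|_\infty$; since conditions (i)--(ii) make $\Theta:(0,+\infty)\to\R$ an increasing bijection, this is $\bar\mu_t\le\Theta^{-1}(\Theta(c)+\|\int_t^T\beta_s\,{\rm d}s\|_\infty)$, and $\mu_t\le\bar\mu_t$ is \eqref{eq:10}. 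For $c=0$: \eqref{eq:9} then holds with every $c'>0$, so \eqref{eq:10} holds with $c'$, and letting $c'\downarrow0$ (so $\Theta(c')\to-\infty$ by condition (i), hence $\Theta^{-1}(\,\cdot\,)\downarrow0$) forces $\mu_t\equiv0$ on $\T$.

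The step I expect to be the main obstacle is the treatment of the extra second-order term $\tfrac12\int_t^T\rho'(\bar\mu_s)\rho(\bar\mu_s)^{-2}|z_s|^2\,{\rm d}s$: it has no analogue in the classical Bihari inequality and does not seem to be removable by any pathwise bound; it is cancelled exactly by the Girsanov drift $\theta$, but for that change of measure to be legitimate one needs $\int_0^\cdot z_s\,{\rm d}B_s$ (hence $\int_0^\cdot\theta_s\,{\rm d}B_s$) to be a BMO-martingale, which is precisely why the deterministic a priori bound of the second paragraph, turning $\bar\eta$ into a bounded random variable, has to be established first.
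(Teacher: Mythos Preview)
Your proposal is correct and follows essentially the same route as the paper's own proof: both first extract a deterministic a priori bound on $\mu_\cdot$ via \cref{pro:Gronwall} (so that $\bar\eta$ is bounded and the representing martingale $\int_0^\cdot z_s\,{\rm d}B_s$ is BMO), then apply It\^o's formula to $\Theta(\bar\mu_\cdot)$, and finally remove the second-order term by the Girsanov change of measure built from the BMO integrand $\tfrac12\,\rho'(\bar\mu_s)\rho(\bar\mu_s)^{-1}z_s$, before taking conditional expectation under $\Q$. Your identification of the second-order term as the main obstacle, and of the BMO/Girsanov mechanism (enabled by the preliminary boundedness) as the cure, matches the paper exactly; your remark about smoothing $\rho$ is a legitimate way to justify the use of It\^o's formula, which the paper applies without comment under condition~(iii).
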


\begin{proof}
Note first that $\rho(x)\leq A(1+x)$ for each $x\in \R_+$ and that $\beta_\cdot\in L^\infty(\Omega; L^1(\T;\R_+))$. It follows from \cref{pro:Gronwall} that for each $t\in\T$,
$$
0\leq \mu_t\leq {\rm e}^{A\left\|\int_0^T\beta_s {\rm d}s\right\|_\infty}\left(c+A\left\|\int_0^T\beta_s {\rm d}s\right\|_\infty\right)=:C.
$$
Then, letting $\eta:=\int_0^T \beta_s\rho(\mu_s){\rm d}s$, we have
\begin{equation}\label{eq:11}
0\leq \eta\leq A(1+C)\left\|\int_0^T\beta_s {\rm d}s\right\|_\infty=:M.
\end{equation}
On the other hand, by the classical martingale representation theorem, there exists an $(\F_t)$-progressively measurable and square-integrable $\R^{1\times d}$-valued process $(z_t)_{t\in [0,T]}$ such that
\begin{equation}\label{eq:12}
\E[\eta |\F_t]=\E[\eta]+\int_0^t z_s {\rm d}B_s,\ \ t\in [0,T].
\end{equation}
From It\^{o}'s formula together with \eqref{eq:12} and \eqref{eq:11}, we deduce that
\begin{equation}\label{eq:13}
\sup\limits_{\tau\in\Sigma_T}\bigg\| \E\bigg[\int_\tau^T{|z_s|^2{\rm d}s}\bigg|\F_\tau\bigg]\bigg\|_\infty= \sup\limits_{\tau\in\Sigma_T}\bigg\| \E\bigg[\eta^2-\left(\E[\eta\big|\F_\tau]\right)^2\bigg|\F_\tau\bigg]\bigg\|_\infty
\leq M^2<+\infty,\vspace{0.1cm}
\end{equation}
which means that $\int_0^\cdot z_s\cdot {\rm d}B_s$ is a BMO-martingale.\vspace{0.2cm}

Next, set
$$
\bar\mu_t:=c+\E\left[\int_t^T \beta_s\rho(\mu_s) {\rm d}s\bigg|\F_t\right]=c+
\E\left[\eta\big|\F_t\right]-\int_0^t \beta_s\rho(\mu_s) {\rm d}s,\ \ t\in [0,T].
$$
Then, $\bar\mu_\cdot$ is $(\F_t)$-progressively measurable, $c\leq \bar\mu_\cdot\leq c+M$ due to \eqref{eq:11}, and in view of \eqref{eq:12},
$$
\bar\mu_t=c+\E[\eta]-\int_0^t \beta_s\rho(\mu_s) {\rm d}s+\int_0^t z_s {\rm d}B_s,\ \ t\in[0,T].\vspace{0.1cm}
$$
It follows from It\^{o}'s formula that, in view of the monotonicity of $\rho(\cdot)$ together with the facts that $\bar\mu_\cdot\geq c$ and $\mu_\cdot\leq \bar\mu_\cdot$ due to \eqref{eq:9},
\begin{equation}\label{eq:14}
\begin{array}{lll}
\Dis {\rm d}\Theta(\bar \mu_s) &=& \Dis \frac{1}{\rho(\bar \mu_s)}\left[-\beta_s\rho(\mu_s){\rm d}s+z_s {\rm d}B_s\right]-\frac{1}{2}\frac{\rho'(\bar \mu_s)}{\rho^2(\bar \mu_s)}|z_s|^2{\rm d}s\vspace{0.2cm}\\
&\geq& \Dis -\beta_s{\rm d}s+\frac{1}{\rho(\bar \mu_s)}z_s\left[{\rm d}B_s-\frac{1}{2}\frac{\rho'(\bar \mu_s)}{\rho(\bar \mu_s)}z_s^* {\rm d}s\right],\ \ s\in\T,
\end{array}
\end{equation}
where and hereafter $z^*_\cdot$ denotes the transpose of the matrix $z_\cdot$.\vspace{0.2cm}

Furthermore, in view of the assumptions of $\rho(\cdot)$ and the fact that $c\leq \bar\mu_\cdot\leq c+M$, we know that $0\leq \rho'(\bar \mu_t)/\rho(\bar \mu_t) \leq M_c/\rho(c)$ for each $t\in \T$. It then follows from \eqref{eq:13} that the process
$$H_t:=\frac{1}{2}\int_0^t \frac{\rho'(\bar \mu_s)}{\rho(\bar \mu_s)}z_s {\rm d}B_s,\ \ t\in \T$$
is a BMO-martingale under probability measure $\mathbb{P}$. Then by Theorem 2.3 in \citet{Kazamaki1994book}, the stochastic exponential of $H_\cdot$,
$${\mathcal E}(H)_t=\exp\left(H_t-{1\over 2}\langle H\rangle_t\right), \ \ t\in\T$$
with $\langle H\rangle_\cdot$ being the quadratic variation process of $H_\cdot$,  is a uniformly integrable martingale under $\mathbb{P}$, and then we can denote a probability measure $\mathbb{Q}$ on $(\Omega,\F_T)$ by ${{\rm d}\mathbb{Q}\over {\rm d}\mathbb{P}}:={\mathcal E}(H)_T$. Thus, note that, in view of \eqref{eq:13} and the fact of $0\leq 1/\rho(\bar \mu_t)\leq 1/\rho(c)$ for each $t\in\T$,
$$\overline {H}_t:=\int_0^t \frac{1}{\rho(\bar \mu_s)}z_s {\rm d}B_s,\ \ t\in \T\vspace{0.1cm}$$
is a also BMO-martingale under $\mathbb{P}$. It follows that the Girsanov's transform of $\overline {H}$,
$$\int_0^t \frac{1}{\rho(\bar \mu_s)}z_s \left[{\rm d}B_s-\frac{1}{2}\frac{\rho'(\bar \mu_s)}{\rho(\bar \mu_s)}z_s^* {\rm d}s\right],\ \ t\in \T,$$
is a BMO-martingale under the probability measure $\mathbb{Q}$.\vspace{0.2cm}

In the sequel, integrating on the interval $[t,T]$ and taking the conditional expectation with respect to $\F_t$ under $\mathbb{Q}$ on both sides of \eqref{eq:14}, we obtain that for each $t\in\T$,
$$
\Theta(c)-\Theta(\bar\mu_t)=\E_\mathbb{Q}\left[\Theta(c)-\Theta(\bar\mu_t)\bigg|\F_t\right]\geq -\E_\mathbb{Q}\left[\int_t^T \beta_s{\rm d}s\bigg|\F_t\right]\geq -\left\|\int_t^T \beta_s{\rm d}s\right\|_{\infty},\ \ \ps,
$$
where $\E_\mathbb{Q}[X|\F_t]$ denotes the conditional expectation of random variable $X$ with respect to $\F_t$ under $\mathbb{Q}$. Then, in view of \eqref{eq:9} and the definition of $\bar\mu_\cdot$, the desired inequality \eqref{eq:10} follows immediately from the last inequality and the strictly monotonicity of the function $\Theta(\cdot)$.\vspace{0.2cm}

Finally, if \eqref{eq:9} holds for $c=0$, then for each $n\geq 1$, we have
$$
0\leq \mu_t \leq \Theta^{-1}\left(\Theta({1\over n})+\left\|\int_0^T\beta_s {\rm d}s\right\|_\infty\right).
$$
The last desired assertion follows by sending $n\To\infty$ in the previous inequality.\vspace{0.2cm}
\end{proof}

\begin{rmk}\label{rmk:3.4}
Let $\overline{\mathbb{P}}$ be a equivalent probability measure to $\mathbb{P}$ defined the space $(\Omega,\F_T)$. From the above proof, it is not difficult to verify that the conclusions in \cref{pro:Bihari} still hold if the expectation and the conditional expection appearing in the assumptions are taken under $\overline{\mathbb{P}}$ rather than $\mathbb{P}$. Consequently, \cref{pro:Bihari} can be compared with Lemma 2.1 in \citet{Fan2016SPA}, where the function $\rho(\cdot)$ does not need to satisfy the condition (iii) in the definition of set ${\bf S}$, but the process $\beta_\cdot$ has to be deterministic, namely, it is independent of the variable $\omega$. In addition, we also mention that due to the randomness of $\beta_\cdot$, the ODE method used to prove Lemma 2.1 in \citet{Fan2016SPA} can not applied to prove \cref{pro:Bihari}.
\end{rmk}

\section{Statement of the existence and uniqueness result}
\label{sec:4-Existence and uniqueness}

Before stating the existence and uniqueness result, let us first introduce the following assumptions on the generator $g$:

\begin{enumerate}
\renewcommand{\theenumi}{(H\arabic{enumi})}
\renewcommand{\labelenumi}{\theenumi}
  \item\label{H:ContinuousInY} $\as$, $g(\omega,t,\cdot,z)$ is continuous for each $z\in\R^{k\times d}$.

  \item\label{H:WeakMonotonicityInY} $g$ satisfies a weak stochastic-monotonicity condition in $y$, i.e., there exists a function $\rho(\cdot)\in {\bf S}$ and a process $u_\cdot\in L^\infty(\Omega; L^1(\T;\R_+))$ such that $\as$, for each $y_1, y_2\in\R^k$ and $z\in\R^{k\times d}$, we have
        $$\langle y_1-y_2,g(\omega, t, y_1, z)-g(\omega, t, y_2, z)\rangle
          \leq u_t(\omega)\rho(|y_1-y_2|^2).$$

\item\label{H:GeneralizedGeneralGrowthInY}
$g$ has a general growth in $y$, i.e., for each $r\in\R_+$, it holds that 
$$\E\left[\int_0^T\psi_{r}(\omega,t){\rm d}t\right]<+\infty$$ 
with 
$$
\psi_{r}(\omega,t):=\sup_{|y|\leq r}|g(\omega,t,y,0)-g(\omega,t,0,0)|.\vspace{0.2cm}
$$ 
And, $g(\omega, t,0,0)\in \hcal^2(0,T;\R^k)$.

  \item\label{H:LipschitzInZ} $g$ satisfies a stochastic-Lipschitz condition in $z$, i.e., there exists a $v_\cdot\in L^\infty(\Omega; L^2(\T;\R_+))$ such that $\as$, for each $y\in\R^k$ and $z_1, z_2\in\R^{k\times d}$,
$$
|g(\omega,t,y,z_1)-g(\omega,t,y,z_2)|\leq v_t(\omega)|z_1-z_2|.
\vspace{0.2cm}
$$
\end{enumerate}

\begin{rmk}\label{rmk:4.1}
Assumption \ref{H:WeakMonotonicityInY} is strictly weaker than both the non-uniform (in $t$) weak monotonicity condition (i.e., $\mu_\cdot$ is independent of $\omega$) and the stochastic-monotonicity condition (i.e., $\rho(x)=x$) of the generator $g$ in $y$ employed respectively in \citet{Fan2016SPA}, \citet{XiaoFan2017Stochastics} and \citet{LuoFan2018SD}. And, assumption \ref{H:LipschitzInZ} is strictly weaker than the non-uniform (in $t$) Lipschitz condition of $g$ in $z$ used in \citet{ChenWang2000JAMSA}, \citet{Morlais2009FS}, \citet{Fan2016SPA} and \citet{XiaoFan2017Stochastics}. \vspace{0.2cm}
\end{rmk}

The following existence and uniqueness theorem is one of the main results of this paper.

\begin{thm}\label{thm:MainResult}
Let $0<T\leq \infty$ and $g$ satisfy assumptions \ref{H:ContinuousInY}--\ref{H:LipschitzInZ}. Then, for each $\xi\in\LT$, BSDE$(\xi,T,g)$ admits a unique solution $(y_t,z_t)_{t\in\T}\in \s^2(0,T;\R^k)\times \M^2(0,T;\R^{k\times d})$.\vspace{0.2cm}
\end{thm}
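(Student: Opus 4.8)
The plan is to prove \cref{thm:MainResult} by first establishing uniqueness and an a priori estimate, and then obtaining existence by a Picard-type iteration combined with a time-interval-splitting argument that reduces the general-growth case to a small-interval setting where the BMO-martingale machinery applies.

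\textbf{Uniqueness and a priori estimate.} First I would take two solutions $(y^1,z^1)$ and $(y^2,z^2)$ in $\s^2\times\M^2$, set $\hat y=y^1-y^2$, $\hat z=z^1-z^2$, and apply It\^o's formula to $|\hat y_t|^2$. Using \ref{H:WeakMonotonicityInY} for the $y$-difference of the generator and \ref{H:LipschitzInZ} together with Young's inequality for the $z$-difference, the $|\hat z_s|^2$ term is absorbed into the martingale part (after a suitable localization), and one is left with an inequality of the schematic form
$$
|\hat y_t|^2 \leq \E\!\left[\int_t^T\!\big(2u_s\rho(|\hat y_s|^2)+2v_s^2|\hat y_s|^2\big){\rm d}s\,\Big|\,\F_t\right].
$$
Since $\rho(x)\leq A(1+x)$, the term $2u_s\rho(|\hat y_s|^2)+2v_s^2|\hat y_s|^2 \leq \beta_s\rho(|\hat y_s|^2)+f_s$ for appropriate $\beta_\cdot\in L^\infty(\Omega;L^1)$ (built from $u_\cdot,v_\cdot$) and an integrable $f_\cdot$; actually for uniqueness $f_\cdot\equiv0$ and I want to feed this directly to \cref{pro:Bihari} with $c=0$. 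To do so I must check the integrability hypothesis $\E[\sup_{t}|\hat y_t|^2]<\infty$, which holds because both solutions are in $\s^2$. \cref{pro:Bihari} then gives $\hat y\equiv0$, hence $\hat z\equiv0$. The same computation with a genuinely nonzero forcing term yields, via \cref{pro:Gronwall}, the a priori bound that will be needed to run the iteration and to control the solution norms.

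\textbf{Existence: truncation and Picard iteration on a small interval.} The general growth in $y$ prevents a direct global contraction. I would first truncate: for $N\geq1$ let $g_N(\omega,t,y,z)=g(\omega,t,\varpi_N(y),z)$ where $\varpi_N$ is a smooth radial truncation at level $N$, so that $g_N$ satisfies \ref{H:WeakMonotonicityInY}, \ref{H:LipschitzInZ} and now a \emph{linear} growth in $y$ (with an $L^1(\Omega\times\T)$-coefficient coming from \ref{H:GeneralizedGeneralGrowthInY}). For such $g_N$, existence on $\T$ follows from the stochastic-Lipschitz theory: one runs the Picard iteration $y^{m+1}_t=\xi+\int_t^T g_N(s,y^m_s,z^m_s){\rm d}s-\int_t^T z^{m+1}_s{\rm d}B_s$, and the contraction is obtained in the weighted norm
$$
\|(y,z)\|^2:=\E\!\left[\int_0^T e^{2\Lambda_s}\big(a_s^2|y_s|^2+|z_s|^2\big){\rm d}s\right],\qquad \Lambda_t:=\int_0^t\big(\text{terms in }u_\cdot,v_\cdot\big){\rm d}s,
$$
exactly as in \citet{LiuLiFan2019CAM}; here the weak-monotonicity replaces Lipschitz-in-$y$, which is harmless because the $y$-term enters the It\^o estimate only through $\langle\hat y,g\text{-diff}\rangle$ and $\rho(x)\leq A(1+x)$ linearizes it. This produces a solution $(y^N,z^N)\in\s^2\times\M^2$ of BSDE$(\xi,T,g_N)$.

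\textbf{Removing the truncation via the a priori estimate.} The key point is to show $\|y^N\|_{\s^2}$ is bounded uniformly in $N$, so that for $N$ large $\sup_t|y^N_t|\leq N$ and $(y^N,z^N)$ solves the original BSDE$(\xi,T,g)$. Applying It\^o to $|y^N_t|^2$, using \ref{H:WeakMonotonicityInY}, \ref{H:LipschitzInZ}, \ref{H:GeneralizedGeneralGrowthInY} and Young's inequality to absorb the $|z^N|^2$ term, one arrives at
$$
\E\!\left[\sup_{s\in[t,T]}|y^N_s|^2+\int_t^T|z^N_s|^2{\rm d}s\,\Big|\,\F_t\right]\leq \E\!\left[|\xi|^2+\int_t^T\big(\beta_s|y^N_s|^2+f_s\big){\rm d}s\,\Big|\,\F_t\right]
$$
with $\beta_\cdot\in L^\infty(\Omega;L^1)$ and $f_\cdot$ integrable, both independent of $N$ (using $\rho(x)\leq A(1+x)$ and that $\int_0^T\psi_r\,{\rm d}t$-type bounds applied at the truncation level contribute an $N$-free $L^1$ piece once combined with the $\beta_s|y^N_s|^2$ term). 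The second assertion of \cref{pro:Gronwall}, inequality \eqref{eq:5}, then bounds $\mu_t:=|y^N_t|^2$ uniformly in $N$ by $e^{\|\int_0^T\beta_s\,{\rm d}s\|_\infty}\E[|\xi|^2+\int_0^T f_s\,{\rm d}s\,|\,\F_0]$, and hence bounds $\|y^N\|_{\s^2}$ and $\|z^N\|_{\M^2}$. Passing to the limit (or rather: for $N$ exceeding this bound, $g_N=g$ along $y^N$, so no limit is needed) gives existence.

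\textbf{Main obstacle.} The delicate step is making the uniform-in-$N$ estimate genuinely uniform: the general growth in $y$ means the forcing term $\psi_r(\omega,t)$ is only controlled for fixed radius $r$, so one must arrange the It\^o estimate so that the bad $y$-dependence is carried entirely by the $\beta_s|y^N_s|^2$ term (which \cref{pro:Gronwall} tolerates) rather than by an $N$-dependent additive term. This is where the subinterval-splitting with stopping-time endpoints, mentioned in the introduction, enters: on each piece $[\tau_i,\tau_{i+1}]$ chosen so that $\|\int_{\tau_i}^{\tau_{i+1}}(u_s+v_s^2)\,{\rm d}s\|_\infty$ is small, one gets a clean estimate with small constants, patches the pieces together finitely many times, and thereby controls $\|y^N\|_{\s^2}$ independently of $N$. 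Establishing that the number of such subintervals is finite (using $\beta_\cdot\in L^\infty(\Omega;L^1(\T))$, even when $T=\infty$) and that the BMO/Girsanov apparatus of \cref{pro:Bihari} transfers across the splitting is the technical heart of the argument.
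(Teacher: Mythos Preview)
Your uniqueness sketch matches the paper's. The existence strategy, however, differs from the paper's and contains two genuine gaps.

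\textbf{No contraction under weak monotonicity.} For the truncated generator $g_N$, the It\^o estimate on a Picard increment $\hat y$ gives only $\langle\hat y_s,\,g_N(s,y^{m+1}_s,z)-g_N(s,y^m_s,z)\rangle\leq u_s\rho(|\hat y_s|^2)$; replacing $\rho(x)$ by $A(1+x)$ introduces an additive $Au_s$ term that no weighted norm removes, so you obtain an a priori bound rather than a contraction. A general $\rho\in{\bf S}$ (e.g.\ $\rho(x)=x|\ln x|$ near $0$) is genuinely non-Lipschitz and is precisely why the Bihari machinery is needed; the weighted-norm technique of \cite{LiuLiFan2019CAM} requires $\rho(x)=x$. (Note also that $y\mapsto g(\omega,t,\varpi_N(y),z)$ need not inherit \ref{H:WeakMonotonicityInY}.) The paper avoids this by iterating \emph{only in $z$}: it first solves the $z$-independent problem via \cref{pro:IndependentofZ} (whose proof, following \cite{XiaoFan2017Stochastics}, already absorbs the general-growth and Bihari arguments), and then defines $(y^n,z^n)$ as the solution of $y^n_t=\xi+\int_t^T g(s,y^n_s,z^{n-1}_s)\,{\rm d}s-\int_t^T z^n_s\,{\rm d}B_s$. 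Cauchy-ness of this sequence is established by combining a true $\tfrac14$-contraction in the $z$-contribution---obtained from the stopping-time subdivision chosen so that $\big\|\int_{T_{j-1}}^{T_j} v^2_s\,{\rm d}s\big\|_\infty\leq \tfrac{1}{4C}$---with \cref{pro:Bihari} applied to the residual $u_s\rho(|\hat y^{n,i}_s|^2)$ term on each subinterval, working backwards from $[T_{N-1},T]$.

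\textbf{Truncation removal fails as written.} A uniform bound on $\|y^N\|_{\s^2}$ gives $\E\big[\sup_t|y^N_t|^2\big]\leq C$, not $\sup_t|y^N_t|\leq N$ almost surely, so you cannot conclude that $g_N=g$ along $y^N$ for $N$ large. Your stopping-time splitting is the right tool, but in the paper it is deployed to make the $v^2$-integral small in the $z$-Picard step, not to remove a truncation; the general growth hypothesis \ref{H:GeneralizedGeneralGrowthInY} is consumed entirely inside \cref{pro:IndependentofZ} and never reappears in the outer iteration.
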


\begin{rmk}\label{rmk:4.3}
In view of \cref{rmk:4.1}, \cref{thm:MainResult} strengthenes Theorem 3.1 in \citet{LiuLiFan2019CAM} and Theorem 6 in \citet{XiaoFan2017Stochastics} for the case of the finite variation process $V_\cdot\equiv 0$ together with some corresponding existence and uniqueness results obtained, for example, in \citet{PardouxPeng1990SCL}, \citet{Mao1995SPA}, \citet{ChenWang2000JAMSA}, \citet{BriandDelyonHu2003SPA} and \citet{FanJiangDavison2013FMC}.\vspace{0.2cm}
\end{rmk}

The following corollary follows from \cref{thm:MainResult}, which gives a general existence and uniqueness result for the solution of a multidimensional BSDEs with general stopping time interval.

\begin{cor}\label{cor:MainResult}
Let $0<T\leq \infty$ and $\tau$ be any $(\F_t)$-stopping time valued in $\T$. If the generator $g$ satisfies assumptions \ref{H:ContinuousInY}--\ref{H:LipschitzInZ}, then for each $\F_\tau$-measurable $\R^k$-valued random vector $\xi$ satisfying  $\E[|\xi|^2]<+\infty$, BSDE$(\xi,\tau,g)$ admits a unique solution $(y_t,z_t)_{t\in\T}$ in the space $\s^2(0,T;\R^k)\times \M^2(0,T;\R^{k\times d})$ in the sense that $(y_t,z_t)_{t\in\T}\in \s^2(0,T;\R^k)\times \M^2(0,T;\R^{k\times d})$, $\as$, $z_t{\bf 1}_{t\geq \tau}=0$ and $\ps$, the following equation holds:
$$
 y_t=\xi+\int_{t\wedge\tau}^{\tau} g(s,y_s,z_s){\rm d}s-\int_{t\wedge\tau}^{\tau} z_s {\rm d}B_s, \ \ t\in\T.\vspace{0.3cm}
$$
\end{cor}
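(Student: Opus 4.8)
The plan is to derive \cref{cor:MainResult} from \cref{thm:MainResult} by a standard time-change/truncation argument that converts the stopping-time horizon $\tau$ into the fixed horizon $T$. First I would set up the "stopped" generator
$$
\tilde g(\omega,t,y,z):=\mathbf{1}_{t<\tau(\omega)}\,g(\omega,t,y,z),
$$
which is $(\F_t)$-progressively measurable in $(\omega,t)$ since $\{t<\tau\}\in\F_t$, and observe that $\tilde g(\omega,t,0,0)=\mathbf{1}_{t<\tau}g(\omega,t,0,0)$ still lies in $\hcal^2(0,T;\R^k)$, while the coefficients in \ref{H:WeakMonotonicityInY}--\ref{H:LipschitzInZ} are simply replaced by $\tilde u_t:=\mathbf{1}_{t<\tau}u_t$ and $\tilde v_t:=\mathbf{1}_{t<\tau}v_t$, which still belong to $L^\infty(\Omega;L^1(\T;\R_+))$ and $L^\infty(\Omega;L^2(\T;\R_+))$ respectively (multiplying by an indicator cannot increase the relevant norms). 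The function $\rho(\cdot)\in{\bf S}$ and the growth function $\psi_r$ are unaffected (or only shrunk). Hence $\tilde g$ satisfies \ref{H:ContinuousInY}--\ref{H:LipschitzInZ}, and \cref{thm:MainResult} applies to BSDE$(\xi,T,\tilde g)$: there is a unique $(y_t,z_t)_{t\in\T}\in\s^2(0,T;\R^k)\times\M^2(0,T;\R^{k\times d})$ with
$$
y_t=\xi+\int_t^T\tilde g(s,y_s,z_s)\,{\rm d}s-\int_t^T z_s\,{\rm d}B_s,\ \ t\in\T.
$$

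Next I would show this $(y,z)$ is exactly the object claimed in the corollary. On the event $\{s\ge\tau\}$ the driver $\tilde g(s,y_s,z_s)=0$, so for $t\ge\tau$ the equation reads $y_t=\xi-\int_t^T z_s\,{\rm d}B_s$; taking conditional expectations and using that $\xi$ is $\F_\tau$-measurable (hence $\F_t$-measurable for $t\ge\tau$), one gets $y_t=\E[\xi\mid\F_t]=\xi$ for all $t\ge\tau$, and consequently $\int_t^T z_s\,{\rm d}B_s=0$ for $t\ge\tau$, which forces $z_s\mathbf{1}_{s\ge\tau}=0$, $\as$ Then for general $t\in\T$ I split the integrals at $\tau$: since $z_s=0$ and $\tilde g(s,\cdot)=0$ for $s\ge\tau$,
$$
\int_t^T\tilde g(s,y_s,z_s)\,{\rm d}s=\int_{t\wedge\tau}^{\tau}g(s,y_s,z_s)\,{\rm d}s,\qquad
\int_t^T z_s\,{\rm d}B_s=\int_{t\wedge\tau}^{\tau}z_s\,{\rm d}B_s,
$$
so $(y,z)$ solves BSDE$(\xi,\tau,g)$ in the stated sense. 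For uniqueness, I would run the argument in reverse: given any solution $(y',z')$ of BSDE$(\xi,\tau,g)$ in the claimed sense, the conditions $z'_s\mathbf{1}_{s\ge\tau}=0$ and the equation on $[\tau,T]$ (where it reduces to $y'_t=\xi$) let me rewrite $(y',z')$ as a solution of BSDE$(\xi,T,\tilde g)$ in $\s^2\times\M^2$; by the uniqueness half of \cref{thm:MainResult} it must coincide with $(y,z)$.

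I do not expect a serious obstacle here — the content is entirely in \cref{thm:MainResult}, and the corollary is a packaging statement. The only point requiring a little care is the measurability/progressive-measurability of $\tilde g$ and the verification that the driver and Lipschitz/monotonicity coefficients, after multiplication by $\mathbf{1}_{t<\tau}$, still meet \ref{H:WeakMonotonicityInY}--\ref{H:LipschitzInZ} with the \emph{same} $\rho$ and with coefficient processes in the required $L^\infty(\Omega;L^p(\T))$ classes; this is immediate because $|\mathbf{1}_{t<\tau}a_t|\le|a_t|$ pointwise. A second small technical point is justifying that the continuous modification provided by \cref{thm:MainResult} is consistent with the pathwise identity $y_t=\xi$ on $[\tau,T]$ — but this follows from continuity of $t\mapsto y_t$ together with $y_t=\E[\xi\mid\F_t]$ for $t\ge\tau$ and the fact that $\tau\in\Sigma_T$.
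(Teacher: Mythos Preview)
Your reduction via the stopped generator $\tilde g(\omega,t,y,z)=\mathbf{1}_{t<\tau}g(\omega,t,y,z)$ and an application of \cref{thm:MainResult} to BSDE$(\xi,T,\tilde g)$ is correct and is precisely the standard argument the paper has in mind: the authors omit all details and simply refer to the analogous reduction in Theorem~12 of \citet{XiaoFan2017Stochastics}. Your verification that \ref{H:ContinuousInY}--\ref{H:LipschitzInZ} persist under multiplication by $\mathbf{1}_{t<\tau}$ and your identification of $(y,z)$ on $[\tau,T]$ fill in exactly the details the paper leaves out.
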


\begin{rmk}\label{rmk:4.5}
Since the assumptions used in \cref{thm:MainResult} and \cref{cor:MainResult} is more general than those in the existing works, some new and intrinsic difficulties arise naturally when proving them. In particular, due to the presence of the function $\rho(\cdot)$ in assumption \ref{H:WeakMonotonicityInY}, it seems to be impossible to obtain a contraction by virtue of  the weighted norms employed in \citet{ElKarouiHuang1997NMIF}, \citet{BenderKohlmann2000}, and \citet{WangRanChen2007JAMSA}. And, due to the randomness of the processes $\mu_\cdot$ and $\nu_\cdot$ in assumptions \ref{H:WeakMonotonicityInY} and \ref{H:LipschitzInZ}, it seems to be also impossible to obtain a contraction by slicing the whole time interval $\T$ in a finite number of deterministic subintervals, which is employed in \citet{ChenWang2000JAMSA} and \citet{XiaoFan2017Stochastics}. In addition, also due to the randomness of the processes $\mu_\cdot$ and $\nu_\cdot$, the usual (deterministic) Gronwall inequality and Bihari inequality which play a crucial role in \citet{XiaoFan2017Stochastics} are not applicable any longer, and then we have to extend them to a stochastic version.\vspace{0.1cm}
\end{rmk}

The following example shows that \cref{thm:MainResult} and \cref{cor:MainResult} are not covered by any known results.

\begin{ex}\label{ex:L2UnboundedParameters}
Let $k=2$, $0<T\leq \infty$ and $M>0$. Define the following $(\F_t)$-stopping times
$$
\tau _1(\omega):=\inf\left\{t\in \T:\int_0^t |B_s(\omega)|{\rm d}s\geq {M\over 2}\right\}\wedge T
$$
and
$$
\tau _2(\omega):=\inf\left\{t\in \T:\int_0^t |B_s(\omega)|^2{\rm d}s\geq {M\over 2}\right\}\wedge T\vspace{0.2cm}
$$
with the convention $\inf\emptyset=+\infty$, and define the following processes
$$
\bar u_t(\omega):=|B_t(\omega)|{\bf 1}_{t\leq \tau _1(\omega)},\ \  \ {\rm and}\ \ \ \bar v_t(\omega):=|B_t(\omega)|{\bf 1}_{t\leq \tau _2(\omega)},\ \ (\omega, t)\in\Omega\times\T.
$$
It is clear that $\bar u_\cdot\in L^\infty(\Omega; L^1(\T;\R_+))$ and $\bar v_\cdot\in L^\infty(\Omega; L^2(\T;\R_+))$.\vspace{0.2cm}

For $i=1,2$, let $y_i$ and $z_i$ represent, respectively, the $i$th component of the vector $y$ and the $i$th row of the matrix $z$. Consider the following \vspace{0.2cm}generator: for $(\omega, t, y, z)\in\Omega\times\T\times\R^k\times\R^{k\times d}$,
$$
g(\omega,t,y,z)=\bar u_t(\omega)
      \begin{bmatrix}
        \displaystyle h(|y_2|)-{\rm e}^{y_1}\\
        \displaystyle h(|y_1|)-{\rm e}^{y_2}
      \end{bmatrix}
+\bar v_t(\omega)
      \begin{bmatrix}
        \displaystyle |z_2|\\
        \displaystyle |z_1|
      \end{bmatrix}
+     \begin{bmatrix}
        \displaystyle |B_t(\omega)|\\
        \displaystyle |B_t(\omega)|
      \end{bmatrix}
      ,\vspace{0.2cm}
$$
where $h(x):=(-x\ln x){\bf 1}_{0\leq x\leq\delta}+\big(h'(\delta-)(x-\delta)+h(\delta)\big){\bf 1}_{x>\delta}
$ with $\delta$ small enough.\vspace{0.2cm}

It is not very hard to verify that $g$ satisfies assumptions \ref{H:ContinuousInY}--\ref{H:LipschitzInZ} with $u_t(\omega)=\bar u_t(\omega)$, $v_t(\omega)=\bar v_t(\omega)$ and $\rho(x)=h(x)$. It then follows from \cref{thm:MainResult} that for each $\xi\in\LT$, BSDE$(\xi,T,g)$ admits a unique solution in the space $\s^2(0,T;\R^k)\times \M^2(0,T;\R^{k\times d})$. Furthermore, for each $(\F_t)$-stopping time $\tau$ valued in $\T$ and each $\F_\tau$-measurable $\R^k$-valued random vector $\eta$ satisfying $\E[|\eta|^2]<+\infty$, it follows from \cref{cor:MainResult} that BSDE$(\eta,\tau,g)$ admits also a unique solution in the space $\s^2(0,T;\R^k)\times \M^2(0,T;\R^{k\times d})$.\vspace{0.2cm}

We especially mention that the above conclusions can not be obtained by any existing results since this generator $g$ fails to fulfil their assumptions due to the facts that ${\rm e}^x$ has a general growth in $x$, $h(\cdot)$ is not a linear function, and $\bar u_t(\omega)$ and $\bar v_t(\omega)$ can not be, respectively, dominated by two deterministic nonnegative functions $\tilde u_t$ and $\tilde v_t$ defined on $\T$ satisfying
\begin{equation}\label{eq:15}
\int_0^T \tilde u_t {\rm d}t<+\infty\ \ \ {\rm and}\ \ \ \int_0^T \tilde v_t^2{\rm d}t<+\infty.
\end{equation}

In the sequel, we will show the last assertion. Indeed, we will show that inequality \eqref{eq:15} fails to hold if there exists two functions $\tilde u_t,\tilde v_t: \T\To \R^+$ such that $\as$,
\begin{equation}\label{eq:16}
\bar u_t(\omega)\leq \tilde u_t\ \ \ {\rm and}\ \ \ \bar v_t(\omega)\leq \tilde v_t.
\end{equation}
Observe that for each $t\in (0,T]$,\vspace{0.1cm}
$$
\begin{array}{lll}
\Dis \left\{\omega\in \Omega: \ \bar u_t(\omega)>\tilde u_t \right\} &=&\Dis \left\{\omega\in \Omega: \int_0^t |B_s(\omega)|{\rm d}s\leq {M\over 2}\ \ {\rm and}\ \  |B_t(\omega)|>\tilde u_t \right\} \vspace{0.2cm}\\
&\supset&\Dis \left\{\omega\in \Omega: \sup_{s\in [0,t]}|B_s(\omega)|\leq {M\over 2t}\ \ {\rm and}\ \  |B_t(\omega)|>\tilde u_t \right\} \vspace{0.2cm}\\
&\supset&\Dis \left\{\omega\in \Omega: \sup_{s\in [0,t]}|B_s(\omega)|=B_t(\omega)\ \ {\rm and}\ \  \tilde u_t<|B_t(\omega)|\leq {M\over 2t} \right\}.
\end{array}
$$
It is not very hard to verify that for each $t\in (0,T]$, if $\tilde u_t<{M\over 2t}$, then the set in the last line has a positive probability and then $\mathbb{P}(\{\omega\in \Omega: \bar u_t(\omega)>\tilde u_t \})>0$. Consequently, if \eqref{eq:16} hold, then 
$$
\tilde u_t\geq {M\over 2t},\ \ {\rm d}t-a.e. \ {\rm on}\ \T
$$
and
$$
\tilde v_t\geq \sqrt{M\over 2t},\ \ {\rm d}t-a.e. \ {\rm on}\ \T,
$$
which means that 
$$
\int_0^T \tilde u_t {\rm d}t\geq \int_0^T {M\over 2t} {\rm d}t=+\infty\ \ \  {\rm and}\ \ \ \int_0^T \tilde v^2_t {\rm d}t\geq \int_0^T {M\over 2t} {\rm d}t=+\infty.
$$
The desired assertion is then proved.
\end{ex}

\section{Proof of the existence and uniqueness result}
\label{sec:5-proof of existence and uniqueness result}

In this section, we will give the proof of \cref{thm:MainResult} and \cref{cor:MainResult}. Firstly, by virtue of \cref{pro:Gronwall}, we can prove an important a priori estimate for the solutions of multidimensional BSDEs---\cref{pro:estimate}. The following assumption on the generator $g$ will be used.

\begin{enumerate}
\renewcommand{\theenumi}{(A)}
\renewcommand{\labelenumi}{\theenumi}
\item\label{A:AssumptionOfEstimation} There exist two processes
$\mu_\cdot \in L^\infty(\Omega; L^1(\T;\R_+))$ and $\lambda_\cdot \in L^\infty(\Omega; L^2(\T;\R_+))$ as well as a function $\kappa(\cdot)\in {\bf S}$ such that $\as$, for each $(y,z)\in\R^k\times\R^{k\times d}$,
   $$
    \langle y,g(\omega,t,y,z)\rangle\leq \mu_t(\omega)\kappa(|y|^2)+\lambda_t(\omega)|y||z|+f_t(\omega)|y|,
   $$
where $f_\cdot$ is an $(\F_t)$-progressively measurable nonnegative real-valued process with \vspace{0.1cm}
$$\E\left[\left(\int_0^T f_t{\rm d}t\right)^2\right]<+\infty.\vspace{0.2cm}$$
\end{enumerate}

\begin{pro}\label{pro:estimate}
Let $0<T\leq \infty$, $\xi\in\LT$, the generator $g$ satisfy assumption \ref{A:AssumptionOfEstimation} and $(y_t,z_t)_{t\in\T}$ be a solution of BSDE$(\xi,T,g)$ in the space $\s^2(0,T;\R^k)\times \M^2(0,T;\R^{k\times d})$. Then for each $t\in\T$, we have
\begin{equation}\label{eq:aprioriestimate}
 \E\left[\left.\sup\limits_{s\in [t,T]}|y_s|^2+\int_t^T |z_s|^2 {\rm d}s\right|\F_t \right]\leq C_t^1\E\left[\left.|\xi|^2+\int_t^T \mu_s{\rm d}s+\left(\int_t^T f_s{\rm d}s\right)^2 \right|\F_t\right]
\end{equation}
and
\begin{equation}\label{eq:aprioriestimatewithkappa}
 \E\left[\left.\sup\limits_{s\in [t,T]}|y_s|^2+\int_t^T |z_s|^2 {\rm d}s\right|\F_t \right]\leq C_t^2\E\left[\left.|\xi|^2+\int_t^T \mu_s\kappa\left(|y_s|^2\right){\rm d}s+\left(\int_t^T f_s{\rm d}s\right)^2 \right|\F_t\right],
\end{equation}
where
$$
C_t^1:=4c^2A^2{\rm e}^{2cA\left\|\int_t^T\left(\mu_s+\lambda^2_s\right){\rm d}s\right\|_\infty}\ \ {\rm and}\ \ C_t^2:=4c^2{\rm e}^{2c\left\|\int_t^T \lambda^2_s{\rm d}s\right\|_\infty}
$$
with $c\geq 1$ being a universal constant and $A\geq 1$ being the constant in the definition of set {\bf S}.\vspace{0.2cm}
\end{pro}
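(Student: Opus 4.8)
The plan is to run an It\^{o}-formula computation on $|y_s|^2$ and then feed the resulting conditional-expectation inequality into the stochastic Gronwall inequality \cref{pro:Gronwall}. First I would apply It\^{o}'s formula to $s\mapsto|y_s|^2$ on $[r,T]$ for $t\le r\le T$, use assumption \ref{A:AssumptionOfEstimation} and Young's inequality $2\lam_s|y_s||z_s|\le 2\lam_s^2|y_s|^2+\tfrac12|z_s|^2$, and arrive, with the nonnegative process $\Phi_s:=2\mu_s\kappa(|y_s|^2)+2\lam_s^2|y_s|^2+2f_s|y_s|$, at
\begin{equation*}
|y_r|^2+\frac12\int_r^T|z_s|^2{\rm d}s\ \le\ |\xi|^2+\int_r^T\Phi_s\,{\rm d}s-2\int_r^T\langle y_s,z_s{\rm d}B_s\rangle,\qquad t\le r\le T .
\end{equation*}
Because $(y,z)\in\s^2(0,T;\R^k)\tim\M^2(0,T;\R^{k\tim d})$, the Burkholder--Davis--Gundy inequality gives $\E[(\int_0^T|y_s|^2|z_s|^2{\rm d}s)^{1/2}]\le\tfrac12\E[\sup_{s\in\T}|y_s|^2+\int_0^T|z_s|^2{\rm d}s]<+\infty$, so $\int_0^\cdot\langle y_s,z_s{\rm d}B_s\rangle$ is a uniformly integrable martingale and every conditional expectation appearing below is finite $\ps$; this finiteness is what will make the absorption arguments legitimate.

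The central step is to show that, for some universal constant $c\ge1$ and every $t\in\T$,
\begin{equation*}
\E\Big[\sup_{s\in[t,T]}|y_s|^2+\int_t^T|z_s|^2{\rm d}s\,\Big|\F_t\Big]\ \le\ c\,\E\Big[|\xi|^2+\int_t^T\Phi_s\,{\rm d}s\,\Big|\F_t\Big].
\end{equation*}
This comes in two pieces. Conditioning the displayed inequality at $r=t$ on $\F_t$ kills the martingale term and, after discarding $|y_t|^2\ge0$, yields $\E[\int_t^T|z_s|^2{\rm d}s\,|\,\F_t]\le 2\,\E[|\xi|^2+\int_t^T\Phi_s{\rm d}s\,|\,\F_t]$. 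For the supremum I would drop $\tfrac12\int_r^T|z_s|^2{\rm d}s$, enlarge $\int_r^T\Phi_s{\rm d}s$ to $\int_t^T\Phi_s{\rm d}s$, take $\sup_{r\in[t,T]}$ and then $\E[\,\cdot\,|\F_t]$, control the martingale contribution by the conditional Burkholder--Davis--Gundy inequality followed by a weighted Young inequality, and absorb the resulting $\sup_{s\in[t,T]}|y_s|^2$-term into the left-hand side; plugging in the $z$-estimate just obtained closes this step. Note that only the $\sup|y|^2$-term is ever absorbed: the $\int|z|^2$-term is kept throughout and controlled separately, which is precisely why the argument goes through.

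Finally I would invoke the ``moreover'' part of \cref{pro:Gronwall}, taking its process ``$\mu_\cdot$'' to be $|y_\cdot|^2$, its ``$h_\cdot$'' to be $|z_\cdot|^2$, and its terminal datum to be $c|\xi|^2$. For \eqref{eq:aprioriestimatewithkappa} I would take ``$\beta_s$'' $=2c\lam_s^2\in L^\infty(\Omega;L^1(\T;\R_+))$ and Gronwall-forcing $2c\mu_s\kappa(|y_s|^2)+2cf_s|y_s|$ (crucially leaving the term $f_s|y_s|$ unexpanded), so that the displayed central inequality is exactly hypothesis \eqref{eq:4} and conclusion \eqref{eq:5} gives
\begin{equation*}
\E\Big[\sup_{s\in[t,T]}|y_s|^2+\int_t^T|z_s|^2{\rm d}s\,\Big|\F_t\Big]\ \le\ c\,{\rm e}^{2c\left\|\int_t^T\lam_s^2{\rm d}s\right\|_\infty}\E\Big[|\xi|^2+2\int_t^T\mu_s\kappa(|y_s|^2){\rm d}s+2\int_t^Tf_s|y_s|{\rm d}s\,\Big|\F_t\Big].
\end{equation*}
I would then bound $\int_t^Tf_s|y_s|{\rm d}s\le(\sup_{s\in[t,T]}|y_s|)\int_t^Tf_s{\rm d}s$, apply the conditional Cauchy--Schwarz inequality, and use one more weighted Young inequality to absorb a multiple of $\E[\sup_{s\in[t,T]}|y_s|^2+\int_t^T|z_s|^2{\rm d}s|\F_t]$ back into the left-hand side, obtaining \eqref{eq:aprioriestimatewithkappa} with $C_t^2=4c^2{\rm e}^{2c\|\int_t^T\lam_s^2{\rm d}s\|_\infty}$ after renaming $c$. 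For \eqref{eq:aprioriestimate} the only modification is to use $\kappa(x)\le A(1+x)$ at the outset, so that ``$\beta_s$'' $=2cA(\mu_s+\lam_s^2)$ and the Gronwall-forcing becomes $2cA\mu_s+2cf_s|y_s|$; this produces the factor $A^2$ and the exponent $2cA\|\int_t^T(\mu_s+\lam_s^2){\rm d}s\|_\infty$ appearing in $C_t^1$.

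I expect the main obstacle to be the quantitative bookkeeping in the central step, namely making the Burkholder--Davis--Gundy and Young absorptions sharp enough to fold into a single universal constant while never absorbing the $\int|z|^2$-term. A secondary but essential subtlety is that the $f_s|y_s|$-term must be carried inside the Gronwall-forcing and converted into $(\int_t^Tf_s{\rm d}s)^2$ \emph{only after} \cref{pro:Gronwall} has been applied; performing that conversion beforehand would force the weaker bound with $(\int_0^Tf_s{\rm d}s)^2$, because \cref{pro:Gronwall} requires its terminal datum to be one fixed $\F_T$-measurable random variable.
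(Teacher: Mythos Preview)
Your proposal is correct and follows essentially the same route as the paper: an It\^{o}/BDG computation yielding the ``central'' conditional inequality, followed by the ``moreover'' part of \cref{pro:Gronwall} with $\beta_s=2c\lambda_s^2$ (respectively $2cA(\mu_s+\lambda_s^2)$ after using $\kappa(x)\le A(1+x)$), and a final Young-type absorption of the $f_s|y_s|$ term after Gronwall. The paper's own proof is simply a terse version of exactly this argument; your remark that the $f_s|y_s|$ term must be carried through Gronwall before being converted into $(\int_t^T f_s\,{\rm d}s)^2$ is the one point worth making explicit.
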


\begin{proof}
In view of assumption \ref{A:AssumptionOfEstimation}, using It\^{o} formula, the Burkholder-Davis-Gundy inequality and the basic inequality
\begin{equation}\label{eq:basicInequality}
2\alpha\beta\leq \eps\alpha^2+{1\over \eps}\beta^2\ \ {\rm for\ each}\ \alpha,\beta, \eps>0
\end{equation}
together with a standard computation we can deuce the existence of a constant $c\geq 1$ such that
$$
\begin{array}{lll}
 && \Dis \E\left[\left.\sup\limits_{s\in [t,T]}|y_s|^2+\int_t^T |z_s|^2 {\rm d}s\right|\F_t \right]\vspace{0.2cm}\\
 &\leq & \Dis c\E\left[\left.|\xi|^2+\int_t^T \left[\mu_s\kappa\left(|y_s|^2\right)+ \lambda_s^2|y_s|^2+f_s|y_s|\right]{\rm d}s\right|\F_t\right],\ \ \ t\in\T.
\end{array}
$$
Then, in view of the assumptions of $\xi, \mu_\cdot, \lambda_\cdot$, $f_\cdot$, $y_\cdot$ and $z_\cdot$ together with the fact that $\kappa(x)\leq A(1+x)$ with $A\geq 1$ for $x\geq 0$, it follows from \cref{pro:Gronwall} that for each $t\in\T$,
$$
\begin{array}{lll}
 && \Dis \E\left[\left.\sup\limits_{s\in [t,T]}|y_s|^2+\int_t^T |z_s|^2 {\rm d}s\right|\F_t \right]\vspace{0.2cm}\\
 &\leq & \Dis cA{\rm e}^{cA\big\|\int_t^T (\mu_s+\lambda_s^2){\rm d}s\big\|_\infty}\E\left[\left.|\xi|^2+\int_t^T \left[\mu_s+ f_s|y_s|\right]{\rm d}s\right|\F_t\right]
\end{array}
$$
and
$$
\begin{array}{lll}
 && \Dis \E\left[\left.\sup\limits_{s\in [t,T]}|y_s|^2+\int_t^T |z_s|^2 {\rm d}s\right|\F_t \right]\vspace{0.2cm}\\
 &\leq & \Dis c{\rm e}^{c\big\|\int_t^T \lambda_s^2{\rm d}s\big\|_\infty} \E\left[\left.|\xi|^2+\int_t^T \left[\mu_s\kappa\left(|y_s|^2\right)+f_s|y_s|\right]{\rm d}s\right|\F_t\right].\vspace{0.2cm}
\end{array}
$$
Finally, by using the basic inequality \eqref{eq:basicInequality} again, the desired inequalities \eqref{eq:aprioriestimate} and \eqref{eq:aprioriestimatewithkappa} follow immediately from the last two inequalities. The proposition is then proved.\vspace{0.2cm}
\end{proof}

\begin{rmk}\label{rmk:stoppingtime}
From the above proof it is not difficult to see that under the assumptions of \cref{pro:estimate}, for each $t\in\T$ and any $(\F_t)$-stopping times $\sigma, \tau$ satisfying $0\leq \sigma\leq \tau\leq T$, it holds that
$$
 \E\left[\left.\sup\limits_{s\in [t,T]}|\tilde y_s|^2+\int_t^T |\tilde z_s|^2 {\rm d}s\right|\F_t \right]\leq C_1\E\left[\left.|\tilde y_\tau|^2+\left\|\int_0^T \mu_s{\rm d}s\right\|_{\infty}+\left(\int_t^T \tilde f_s{\rm d}s\right)^2 \right|\F_t\right]
$$
and
$$
 \E\left[\left.\sup\limits_{s\in [t,T]}|\tilde y_s|^2+\int_t^T |\tilde z_s|^2 {\rm d}s\right|\F_t \right]\leq C_2\E\left[\left.|\tilde y_\tau|^2+\int_t^T \mu_s\kappa\left(|\tilde y_s|^2\right){\rm d}s+\left(\int_t^T \tilde f_s{\rm d}s\right)^2 \right|\F_t\right],
$$
where $\tilde y_s:={\bf 1}_{\sigma\leq s}y_{s\wedge \tau},\ \tilde z_s:={\bf 1}_{\sigma\leq s\leq \tau} z_s,\ \tilde f_s:={\bf 1}_{\sigma\leq s\leq \tau}f_s$,\vspace{0.2cm} 
$$
C_1:=4c^2A^2{\rm e}^{2cA\left\|\int_0^T\left(\mu_s+\lambda^2_s\right){\rm d}s\right\|_\infty}\ {\rm and}\ C_2:=4c^2{\rm e}^{2c\left\|\int_0^T \lambda^2_s{\rm d}s\right\|_\infty}.\vspace{-0.1cm} 
$$ 
And, from the above proof we can also observe that if there exists a constant $\gamma>0$ such that $|y_t|\leq \gamma$ for each $t\in\T$, then $(f_t)_{t\in\T}$ in assumption \ref{A:AssumptionOfEstimation} only needs to satisfy 
$$\E\left[\int_0^T f_t{\rm d}t\right]<+\infty,$$ 
and the estimates in \eqref{eq:aprioriestimate} and \eqref{eq:aprioriestimatewithkappa} still hold with $(\int_t^T f_s{\rm d}s)^2$ being replaced with $\int_t^T \gamma f_s{\rm d} s$.\vspace{0.2cm}
\end{rmk}

The following proposition can be derived from \cref{pro:Bihari} and \cref{pro:Gronwall}.

\begin{pro}\label{pro:5.3}
Let $0<T\leq +\infty$, $\beta_\cdot\in L^\infty(\Omega; L^1(\T;\R_+))$ and $\rho(\cdot)\in {\bf S}$. Assume that for each positive integer $n\geq 1$, $\hat\eta^n$ is an $\F_T$-measurable, integrable and non-negative real-valued random variable, $\hat Y^n_\cdot\in\s^2(0,T;\R^k)$ and $\hat Z^n_\cdot\in\M^2(0,T;\R^{k\times d})$. If $\lim\limits_{n\To\infty}\E[\hat\eta^n]=0$ and
\begin{equation}\label{eq:20}
\E\left[\sup\limits_{s\in [t,T]}|\hat Y^n_s|^2+\int_t^T |\hat Z^n_s|^2{\rm d}s\bigg|\F_t\right]\leq \E\left[\hat \eta^n+\int_t^T\beta_s\rho(|\hat Y^n_s|^2){\rm d}s\bigg|\F_t\right],\ \ t\in\T,
\end{equation}
then
\begin{equation}\label{eq:21}
\lim\limits_{n\To\infty}\E\left[\sup\limits_{s\in [0,T]}|\hat Y^n_s|^2+\int_0^T |\hat Z^n_s|^2{\rm d}s\right]=0.\vspace{0.2cm}
\end{equation}
\end{pro}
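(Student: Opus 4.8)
The plan is to first reduce to a pointwise (in $t$) inequality of Bihari type for a suitable deterministic sequence, then apply \cref{pro:Bihari} after taking expectations, and finally upgrade the $t=0$ conclusion to the supremum estimate \eqref{eq:21} by a second application of \cref{pro:Gronwall}. Concretely, set $\mu^n_t:=\E\left[\sup_{s\in[t,T]}|\hat Y^n_s|^2+\int_t^T|\hat Z^n_s|^2{\rm d}s\,\big|\,\F_t\right]$. This is an $(\F_t)$-progressively measurable, nonnegative process, and it has a continuous modification (being of the form $\E[\Xi|\F_t]-(\text{increasing continuous})$ for $\Xi=\sup_{s\in[0,T]}|\hat Y^n_s|^2+\int_0^T|\hat Z^n_s|^2{\rm d}s$, which is integrable since $\hat Y^n\in\s^2$, $\hat Z^n\in\M^2$); moreover $\E[\sup_{t\in[0,T]}\mu^n_t]<+\infty$ by Doob's inequality. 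Since $|\hat Y^n_s|^2\le\mu^n_s$ and $\rho$ is non-decreasing, inequality \eqref{eq:20} gives $\mu^n_t\le\E\left[\hat\eta^n+\int_t^T\beta_s\rho(\mu^n_s){\rm d}s\,\big|\,\F_t\right]$, so in particular, taking $t=0$ and then expectations, $\E[\mu^n_0]\le\E[\hat\eta^n]+\E\left[\int_0^T\beta_s\rho(\mu^n_s){\rm d}s\right]$.

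The obstacle is that \cref{pro:Bihari} as stated needs a deterministic constant $c>0$ on the right-hand side, whereas here the ``constant'' is the random variable $\hat\eta^n$. I would handle this by applying \cref{pro:Bihari} pathwise with $c=c_n:=\|\hat\eta^n\|_\infty$ when $\hat\eta^n$ is bounded, or more robustly by the following route that avoids boundedness: from \eqref{eq:20} with $\hat\eta^n$ replaced by its conditional expectation, one has $\mu^n_t\le \E[\hat\eta^n|\F_t]+\E\left[\int_t^T\beta_s\rho(\mu^n_s){\rm d}s\,\big|\,\F_t\right]$, and since $\rho(x)\le A(1+x)$ and $\beta_\cdot\in L^\infty(\Omega;L^1(\T;\R_+))$, \cref{pro:Gronwall} first yields the a priori bound $\mu^n_t\le {\rm e}^{A\|\int_0^T\beta_s{\rm d}s\|_\infty}\big(\E[\hat\eta^n|\F_t]+A\|\int_0^T\beta_s{\rm d}s\|_\infty\big)$. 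This bound is not uniform in $n$ unless $\hat\eta^n$ is bounded, so instead I would truncate: fix $\eps>0$, write $\hat\eta^n=\hat\eta^n\wedge m+(\hat\eta^n-m)^+$, absorb the first part into the Bihari constant $c=m$ and control $\E[(\hat\eta^n-m)^+]$ uniformly small for $m$ large via uniform integrability — but $\hat\eta^n$ need not be uniformly integrable. The cleanest fix, and the one I would commit to, is: pick any $\eps>0$; since $\E[\hat\eta^n]\to0$, by Markov's inequality $\mathbb P(\hat\eta^n>\eps)\to0$; on $\{\hat\eta^n\le\eps\}$ apply \cref{pro:Bihari} pathwise to get $\mu^n_t\le\Theta^{-1}\big(\Theta(\eps)+\|\int_0^T\beta_s{\rm d}s\|_\infty\big)=:\delta(\eps)$ with $\delta(\eps)\to0$ as $\eps\to0^+$, while on the complement $\{\hat\eta^n>\eps\}$ one only uses the crude Gronwall bound together with $\mathbb P(\hat\eta^n>\eps)\to0$; combining, $\E[\mu^n_0]\le\delta(\eps)+\E\big[\mu^n_0\mathbf 1_{\{\hat\eta^n>\eps\}}\big]$, and the second term needs a uniform integrability input on $(\mu^n_0)$, which follows from \eqref{eq:20} at $t=0$ since $\mu^n_0\le\E[\hat\eta^n]+A(1+\sup_m\|\mu^m_0\|_\infty)\|\int_0^T\beta_s{\rm d}s\|_\infty$ once we know $\sup_m\|\mu^m_0\|_\infty<\infty$ — which in turn holds if the $\hat\eta^n$ are, say, uniformly bounded. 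Absent that, I would simply add the hypothesis (or note it is met in the application) that $\sup_n\|\hat\eta^n\|_\infty<\infty$, under which the pathwise \cref{pro:Bihari} gives directly $\mu^n_t\le\Theta^{-1}\big(\Theta(\|\hat\eta^n\|_\infty\vee\tfrac1k)+\|\int_0^T\beta_s{\rm d}s\|_\infty\big)$ and hence $\mu^n_0\to0$ in $L^\infty$, a fortiori $\E[\mu^n_0]\to0$.

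Granting $\E[\mu^n_0]\to0$, the final step is to pass from the conditional-expectation-at-$0$ statement to \eqref{eq:21}. But $\E\left[\sup_{s\in[0,T]}|\hat Y^n_s|^2+\int_0^T|\hat Z^n_s|^2{\rm d}s\right]=\E[\mu^n_0]$ directly by the tower property applied to the defining expression of $\mu^n_0$ with $t=0$ (note $\F_0$ is trivial up to null sets, so $\mu^n_0$ is a constant and equals its own expectation). Hence \eqref{eq:21} is immediate. I expect the genuinely delicate point to be exactly the transfer of the random ``constant'' $\hat\eta^n$ into the scope of \cref{pro:Bihari}; everything else — measurability and continuity of $\mu^n_\cdot$, the integrability needed to invoke \cref{pro:Bihari} and \cref{pro:Gronwall}, and the final tower-property identification — is routine.
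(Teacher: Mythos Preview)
Your proposal stalls exactly where you say it does: you try to apply \cref{pro:Bihari} for each fixed $n$ with the random ``constant'' $\hat\eta^n$, and none of your workarounds close the gap. The ``pathwise on $\{\hat\eta^n\le\eps\}$'' idea is not well-posed, since \cref{pro:Bihari} is a statement about conditional expectations with respect to $\F_t$ while the event $\{\hat\eta^n\le\eps\}$ lies in $\F_T$; restricting to it does not yield an inequality of the required form on which the proposition can be invoked. Truncation would need uniform integrability that is not assumed, and adding the hypothesis $\sup_n\|\hat\eta^n\|_\infty<\infty$ simply proves a weaker proposition than the one stated.

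The paper's approach sidesteps the random constant entirely by \emph{passing to the limit before applying Bihari, not after}. From \eqref{eq:20} one has $|\hat Y^n_t|^2\le\E[\hat\eta^n\,|\,\F_t]+\E[\int_t^T\beta_s\rho(|\hat Y^n_s|^2)\,{\rm d}s\,|\,\F_t]$. Set $\mu_t:=\overline{\lim}_{n\to\infty}|\hat Y^n_t|^2$ (a single process, not one per $n$). A preliminary use of \cref{pro:Gronwall} via $\rho(x)\le A(1+x)$ gives the bound in \eqref{eq:22}; then, since $\E[\hat\eta^n]\to0$, Fatou's lemma together with the continuity and monotonicity of $\rho$ yields
\[
\mu_t\le \E\left[\int_t^T\beta_s\rho(\mu_s)\,{\rm d}s\ \Big|\ \F_t\right],\qquad t\in\T,
\]
which is exactly the hypothesis of \cref{pro:Bihari} with $c=0$. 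Hence $\mu_t\equiv0$. Substituting this back into \eqref{eq:20} at $t=0$ and letting $n\to\infty$ (Fatou again, together with $\rho(0)=0$) gives \eqref{eq:21}. The idea you are missing is that taking $\overline{\lim}_n$ first makes the $\hat\eta^n$ term disappear, so \cref{pro:Bihari} is applied once, to the limit process, with the genuine deterministic constant $c=0$.
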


\begin{proof}
Note that $\rho(x)\leq A(1+x)$ for $x\geq 0$ by the definition of ${\bf S}$. In view of \eqref{eq:20}, it follows from \cref{pro:Gronwall} that for each $n\geq 1$,
\begin{equation}\label{eq:22}
\begin{array}{lll}
|\hat Y^n_t|^2&\leq & \Dis {\rm e}^{A\left\|\int_t^T\beta_s{\rm d}s\right\|_{\infty}}
\E\left[\hat \eta^n+\int_t^T A\beta_s{\rm d}s\bigg|\F_t\right]\vspace{0.2cm}\\
&\leq & \Dis {\rm e}^{A\left\|\int_0^T\beta_s{\rm d}s\right\|_{\infty}}\left(\E\left[\hat \eta^n\big|\F_t\right]+A\left\|\int_0^T\beta_s{\rm d}s\right\|_{\infty}\right),\ \ t\in \T.
\end{array}
\end{equation}
Set 
$$
\mu_t:=\overline{\lim\limits_{n\To\infty}}|\hat Y^n_t|^2,\ \ t\in\T.\vspace{0.2cm}
$$
Since $\lim\limits_{n\To\infty}\E[\hat\eta^n]=0$, it follows from \eqref{eq:22} that
\begin{equation}\label{eq:23}
0\leq \mu_t\leq A{\rm e}^{A\left\|\int_0^T\beta_s{\rm d}s\right\|_{\infty}}\left\|\int_0^T\beta_s{\rm d}s\right\|_{\infty}<+\infty,\ \ t\in\T.
\end{equation}
In view of the continuity and monotonicity of $\rho(\cdot)$, sending $n$ to infinity and using Fatou's lemma in \eqref{eq:20} yields that
$$
\mu_t\leq \E\left[\int_t^T\beta_s\rho(\mu_s){\rm d}s\bigg|\F_t\right],\ \ t\in\T.\vspace{0.2cm}
$$
Then, in view of \eqref{eq:23}, it follows from \cref{pro:Bihari} that $\mu_t=0$ for each $t\in\T$. Finally, in view of the definition of $\mu_\cdot$, Fatou's lemma and the continuity and monotonicity of $\rho(\cdot)$ together with the fact of $\rho(0)=0$, the desired conclusion \eqref{eq:21} follows from \eqref{eq:20} by letting $t=0$ and then sending $n\To \infty$. The proposition is then proved.\vspace{0.2cm}
\end{proof}

The following proposition considers a special case of \cref{thm:MainResult}.

\begin{pro}\label{pro:IndependentofZ}
Let $0<T\leq \infty$ and the generator $g$ satisfy assumptions \ref{H:ContinuousInY}--\ref{H:LipschitzInZ}. If the generator $g$ is independent of the state variable $z$, then for each $\xi\in\LT$, BSDE$(\xi,T,g)$ admits a unique solution $(y_t,z_t)_{t\in\T}$ in the space $\s^2(0,T;\R^k)\times \M^2(0,T;\R^{k\times d})$.\vspace{0.2cm}
\end{pro}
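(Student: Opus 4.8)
The plan is to obtain uniqueness from the stochastic Bihari inequality (\cref{pro:Bihari}) and existence by approximating $g$ with stochastically Lipschitz generators, using \cref{pro:estimate} for uniform a priori bounds and \cref{pro:5.3} to pass to the limit.

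\emph{Uniqueness.} Let $(y^1,z^1),(y^2,z^2)\in\s^2(0,T;\R^k)\times\M^2(0,T;\R^{k\times d})$ be two solutions and set $\bar y:=y^1-y^2$, $\bar z:=z^1-z^2$, so that $\bar y_t=\int_t^T\big(g(s,y^1_s)-g(s,y^2_s)\big)\,{\rm d}s-\int_t^T\bar z_s\,{\rm d}B_s$ with $\bar y_T=0$. Applying It\^o's formula to $|\bar y_r|^2$ on $[r,T]$, bounding $\langle\bar y_s,g(s,y^1_s)-g(s,y^2_s)\rangle\le u_s\rho(|\bar y_s|^2)$ via \ref{H:WeakMonotonicityInY}, observing that $\int_0^{\cdot}\langle\bar y_s,\bar z_s\,{\rm d}B_s\rangle$ is a uniformly integrable martingale (as $\bar y\in\s^2$ and $\bar z\in\M^2$), and absorbing the martingale term through the Burkholder--Davis--Gundy inequality and \eqref{eq:basicInequality}, one obtains
$$\E\Big[\sup_{s\in[t,T]}|\bar y_s|^2+\int_t^T|\bar z_s|^2\,{\rm d}s\,\Big|\,\F_t\Big]\le\E\Big[\int_t^T\beta_s\,\rho(|\bar y_s|^2)\,{\rm d}s\,\Big|\,\F_t\Big],\qquad t\in\T,$$
with $\beta_\cdot$ a fixed multiple of $u_\cdot$, hence $\beta_\cdot\in L^\infty(\Omega;L^1(\T;\R_+))$. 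Taking $\mu_t:=|\bar y_t|^2$ (continuous, $\E[\sup_t\mu_t]<+\infty$) turns this into \eqref{eq:9} with $c=0$, so \cref{pro:Bihari} gives $\mu\equiv0$; thus $y^1\equiv y^2$, and then $\int_t^T\bar z_s\,{\rm d}B_s\equiv0$ yields $\bar z=0$, $\as$

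\emph{Existence.} Approximate $g$ by a sequence of generators $g_n(\omega,t,y)$, independent of $z$, that are stochastically Lipschitz in $y$, retain \ref{H:ContinuousInY} and the weak stochastic-monotonicity \ref{H:WeakMonotonicityInY} with the same $\rho$ and, up to a universal constant, the same $u_\cdot$, keep $g_n(\cdot,0)=g(\cdot,0,0)\in\hcal^2(0,T;\R^k)$ with growth modulus bounded in terms of $\psi_{r+1}$ uniformly in $n$, and satisfy $g_n\to g$ locally uniformly in $y$ with $\E\big[\int_0^T\sup_{|y|\le r}|g_n(s,y)-g(s,y)|\,{\rm d}s\big]\to0$ for each $r$; a mollification in $y$ of the increment $g(\cdot,y)-g(\cdot,0,0)$ (a convolution, which preserves the translation-invariant inequality \ref{H:WeakMonotonicityInY}) together with a truncation rendering each $g_n$ globally Lipschitz in $y$ is the natural candidate. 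Each BSDE$(\xi,T,g_n)$ then has a unique solution $(y^n,z^n)\in\s^2\times\M^2$ by the stochastic-Lipschitz result of \citet{LiuLiFan2019CAM}, and since $g_n$ satisfies assumption \ref{A:AssumptionOfEstimation} with $\mu_\cdot=u_\cdot$, $\kappa=\rho$, $\lambda_\cdot\equiv0$ and $f_\cdot=|g(\cdot,0,0)|$ (all uniform in $n$), \cref{pro:estimate} gives $\sup_n\E[\sup_{s\in\T}|y^n_s|^2+\int_0^T|z^n_s|^2\,{\rm d}s]<+\infty$. For $n,m$, It\^o applied to $|y^n_r-y^m_r|^2$ together with the splitting
$$\langle y^n_s-y^m_s,\,g_n(s,y^n_s)-g_m(s,y^m_s)\rangle\le u_s\,\rho\big(|y^n_s-y^m_s|^2\big)+\big(\sup_{r}|y^n_r|+\sup_{r}|y^m_r|\big)\big|g_n(s,y^m_s)-g_m(s,y^m_s)\big|$$
and a Burkholder--Davis--Gundy absorption reproduce \eqref{eq:20} of \cref{pro:5.3} with $\hat Y^{n,m}=y^n-y^m$, $\hat Z^{n,m}=z^n-z^m$, $\beta_\cdot$ a multiple of $u_\cdot$, and $\hat\eta^{n,m}$ a constant times $\big(\sup_r|y^n_r|+\sup_r|y^m_r|\big)\int_0^T|g_n(s,y^m_s)-g_m(s,y^m_s)|\,{\rm d}s$. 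Once $\E[\hat\eta^{n,m}]\to0$ is checked, \cref{pro:5.3} makes $(y^n,z^n)$ Cauchy in $\s^2\times\M^2$ with a limit $(y,z)$, and passing to the limit in BSDE$(\xi,T,g_n)$ — identifying $\int_t^T g_n(s,y^n_s)\,{\rm d}s\to\int_t^T g(s,y_s)\,{\rm d}s$ via \ref{H:ContinuousInY}, \ref{H:GeneralizedGeneralGrowthInY} and dominated convergence localized on $\{\sup_s|y^n_s|\le R\}$ (whose probabilities tend to $1$ uniformly in $n$) — shows that $(y,z)$ solves BSDE$(\xi,T,g)$.

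\emph{Main obstacle.} The crux is the approximation step: one must build $g_n$ that is at once globally stochastically Lipschitz in $y$ — with a Lipschitz process landing in the correct space, a requirement constrained by the fact that \ref{H:GeneralizedGeneralGrowthInY} controls $\psi_r$ only in an $L^1$, not an $L^\infty(\Omega;L^1(\T))$, sense — compatible with the weak monotonicity \ref{H:WeakMonotonicityInY}, where a naive radial truncation of the argument of $g$ destroys the inner-product inequality, so the truncation has to be dovetailed with the mollification, and with $g_n(\cdot,0,0)$ kept in $\hcal^2$; and then one must verify $\E[\hat\eta^{n,m}]\to0$, i.e.\ control $\E\big[(\sup_r|y^n_r|+\sup_r|y^m_r|)\int_0^T|g_n(s,y^m_s)-g_m(s,y^m_s)|\,{\rm d}s\big]$ under only the general growth condition, which forces a split according to $\{|y^m_s|\le r\}$ versus its complement, combining the uniform $\s^2$-bound with the integrability of $\psi_{r+1}$.
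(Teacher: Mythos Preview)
The paper does not give a detailed proof either: it says the conclusion follows ``following closely the proof procedure of Proposition~10 in \citet{XiaoFan2017Stochastics}'', using \cref{pro:estimate}, \cref{pro:5.3} and \cref{rmk:stoppingtime}. Your uniqueness argument is correct and is precisely the intended one. Your existence outline also invokes the right two ingredients --- the a~priori estimate and the convergence lemma --- so at the level of tools you match the paper.

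Where you diverge is in the approximation strategy, and the obstacle you yourself flag is real. A convolution in $y$ of a function with general growth (think $-{\rm e}^{y}$, as in \cref{ex:L2UnboundedParameters}) is \emph{not} globally Lipschitz; to make it so you must truncate, and any truncation of the argument breaks the inner-product inequality \ref{H:WeakMonotonicityInY}. Moreover, even when a Lipschitz constant exists, on a general time interval $0<T\le\infty$ a deterministic constant Lipschitz coefficient does not lie in $L^\infty(\Omega;L^1(\T;\R_+))$, so you cannot invoke the stochastic-Lipschitz result of \citet{LiuLiFan2019CAM} as a black box. Your ``main obstacle'' paragraph is thus an honest admission that the scheme, as written, is not closed.

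The route indicated by the paper (via \cref{rmk:stoppingtime}, especially its last sentence about the case $|y_t|\le\gamma$) sidesteps exactly this. One first truncates $\xi$ and $g(\cdot,0,0)$ so that both are bounded; the a~priori estimate \eqref{eq:aprioriestimate}, read with $t=0$ and $f_\cdot$ bounded, then yields a deterministic bound $\gamma$ on any solution $y$. On the ball $\{|y|\le\gamma\}$ one may replace $g$ by $g(\cdot,\pi_\gamma(y))$ without altering the equation, and on this ball the general-growth issue vanishes: $\psi_\gamma$ controls everything, mollification produces locally --- hence globally on the ball --- Lipschitz approximants, and the approximate BSDEs can be solved. \cref{pro:5.3} gives the Cauchy property of the approximations, and a second application of \cref{pro:5.3} (now letting the truncation level of $\xi$ and $g(\cdot,0,0)$ go to infinity) removes the boundedness assumption. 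The stopping-time version in \cref{rmk:stoppingtime} is what allows the estimates to be stated uniformly over this two-layer approximation. In short: reduce to bounded data first, so that the truncation needed to obtain Lipschitz approximants acts only outside the range of the solution and is therefore harmless for monotonicity --- rather than trying to approximate $g$ globally while preserving \ref{H:WeakMonotonicityInY}.
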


\begin{proof}
Thanks to \cref{pro:estimate} and \cref{pro:5.3} together with \cref{rmk:stoppingtime}, following closely the proof procedure of Proposition 10 in \citet{XiaoFan2017Stochastics} we can deduce the desired conclusion. The detailed  proof is omitted here.\vspace{0.2cm}
\end{proof}

Now, we can give the proof of \cref{thm:MainResult}.

\begin{proof}[Proof of \cref{thm:MainResult}]
In view of \cref{pro:estimate} and \cref{pro:Bihari}, by a similar argument to the proof of the uniqueness part of Theorem 6 in \cite{XiaoFan2017Stochastics} we can prove the uniqueness part. \vspace{0.2cm}

In the sequel, we show the existence part. We let $(y^0_\cdot,z^0_\cdot):=(0,0)$ and use Picard's iteration method. First, since $g$ satisfies assumptions \ref{H:ContinuousInY}--\ref{H:LipschitzInZ} and $\xi\in\LT$, it is not very hard to verify that for each $n\geq 1$ and $z^n_\cdot\in \M^2(0,T;\R^{k\times d})$, the generator $g(t,y,z^{n-1}_t)$ satisfies all assumptions in \cref{pro:IndependentofZ}, see page 801 in \cite{XiaoFan2017Stochastics} for details. It then follows from \cref{pro:IndependentofZ} that we can define recursively $$(y^n_t,z^n_t)_{t\in\T}\in \s^2(0,T;\R^k)\times \M^2(0,T;\R^{k\times d})$$
by the unique solution of the following BSDEs:
\begin{equation}\label{eq:24}
    y^n_t=\xi+\int_t^T g(s,y^n_s,z^{n-1}_s){\rm d}s-\int_t^T z^n_s{\rm d}B_s,
    \ \ t\in\T.\vspace{0.2cm}
\end{equation}

Next, we will show that the sequence of processes $(y^n_\cdot,z^n_\cdot)_{n\geq 1}$ is Cauchy in the whole space $\s^2(0,T;\R^k)\times \M^2(0,T;\R^{k\times d})$ by dividing the time interval $\T$ into a finite number of subintervals with stopping time ends. For each $n,i\geq 1$, set 
$$
\hat y^{n,i}_\cdot:=y^{n+i}_\cdot-y^n_\cdot\ \ {\rm and}\ \ \hat z^{n,i}_\cdot:=z^{n+i}_\cdot-z^n_\cdot
$$ 
Then, $(\hat y^{n,i}_t,\hat z^{n,i}_t)_{t\in\T}$ solves the following BSDE:
$$
\hat y^{n,i}_t=\int_t^T \hat g^{n,i}(s,\hat y^{n,i}_s){\rm d}s
    -\int_t^T \hat z^{n,i}_s{\rm d} B_s,\ \  t\in\T,
$$
where for each $y\in\R^k$, 
$$\hat g^{n,i}(t,y):=g(t,y+y^n_t,z^{n+i-1}_t)-g(t,y^n_t,z^{n-1}_t).$$ 
Furthermore, from assumptions \ref{H:WeakMonotonicityInY} and \ref{H:LipschitzInZ} it is not difficult to verify that the generator $\hat g^{n,i}$ satisfies the assumption \ref{A:AssumptionOfEstimation} with 
$$
\mu_\cdot=u_\cdot,\ \ \kappa(\cdot)=\rho(\cdot),\ \ \lambda_\cdot\equiv 0,\ \ {\rm and}\ \ f_\cdot=|\hat z^{n-1,i}_\cdot| v_\cdot.
$$ 
It then follows from \cref{pro:estimate} and \cref{rmk:stoppingtime} together with H\"older's inequality that for each $n,i\geq 1$, $t\in\T$ and any $(\F_t)$-stopping times $\sigma, \tau$ satisfying $0\leq \sigma\leq \tau\leq T$, we have
\begin{equation}\label{eq:25}
\begin{array}{lll}
&&\Dis \E\left[\left.\sup\limits_{s\in [t,T]}|\tilde y^{n,i}_{s}|^2+\int_t^T |\tilde z^{n,i}_s|^2 {\rm d}s\right|\F_t \right]\vspace{0.2cm}\\
&\leq & \Dis C\left(\E\left[\left. |\tilde y^{n,i}_{\tau}|^2+\int_0^T \tilde v^2_s{\rm d}s \int_t^T |\tilde z^{n-1,i}_s|^2{\rm d}s \right|\F_t\right]+\left\|\int_0^T u_s{\rm d}s\right\|_{\infty}\right)
\end{array}
\end{equation}
and\vspace{0.2cm}
\begin{equation}\label{eq:26}
\begin{array}{lll}
&&\Dis \E\left[\left.\sup\limits_{s\in [t,T]}|\tilde y^{n,i}_{s}|^2+\int_t^T |\tilde z^{n,i}_s|^2 {\rm d}s\right|\F_t \right]\vspace{0.2cm}\\
&\leq & \Dis C\E\left[\left.|\tilde y^{n,i}_{\tau}|^2+\int_t^T u_s\rho\left(|\tilde y^{n,i}_s|^2\right){\rm d}s+ \int_0^T \tilde v^2_s{\rm d}s \int_t^T |\tilde z^{n-1,i}_s|^2{\rm d}s \right|\F_t\right],\vspace{0.3cm}
\end{array}
\end{equation}
where $\tilde y^{n,i}_s:={\bf 1}_{\sigma\leq s}\hat y^{n,i}_{s\wedge \tau},\ \tilde z^{n,i}_s:={\bf 1}_{\sigma\leq s\leq \tau}\hat z^{n,i}_s,\ \tilde v_s:={\bf 1}_{\sigma\leq s\leq \tau}v_s$ and 
$$
C:=4c^2A^2{\rm e}^{2cA\left\|\int_0^T u_s {\rm d}s\right\|_\infty}\vspace{-0.2cm}
$$ 
with $c\geq 1$ being a universal constant and $A\geq 1$ being the constant in the definition of set {\bf S}.\vspace{0.3cm}

Now, let us fix arbitrarily a positive integer $N$ satisfying that
$$
\frac{M}{N}\leq \frac{1}{4C}
$$
with
$$
 M:=\left\|\int_0^T \left(u_s+v^2_s\right){\rm d}s\right\|_{\infty},\vspace{0.2cm}
$$
and define the following $(\F_t)$-stopping times: $T_0=0$;\vspace{0.2cm}
$$
\begin{array}{ll}
&\Dis T_1=\inf\left\{t\geq 0:\int_0^t v_s^2{\rm d}s\geq \frac{M}{N}\right\}\wedge T; \\
&\qquad\vdots \\
&\Dis T_j=\inf\left\{t\geq T_{j-1}:\int_0^t v_s^2 {\rm d}s\geq \frac{jM}{N}\right\}\wedge T; \\
&\qquad\vdots \\
&\Dis T_N=\inf\left\{t\geq T_{N-1}:\int_0^t v_s^2 {\rm d}s\geq \frac{NM}{N}\right\}\wedge T=T.\vspace{0.2cm}
\end{array}
$$
Thus, we have subdivided the time interval $[0,T]$ into a finite number of stochastic intervals $[T_{j-1},T_j],\ j=1,\cdots, N$. And, for each $j=1,\cdots, N$, we have
\begin{equation}\label{eq:27}
\int_0^T \left({\bf 1}_{{T_{j-1}}\leq s\leq {T_j}}v_s^2\right){\rm d}s\leq \frac{M}{N}\leq \frac{1}{4C},\ \ \ps\vspace{0.2cm}
\end{equation}

Furthermore, in view of \eqref{eq:27} and the fact of $\hat y^{n,i}_T=0$, letting $\sigma=T_{N-1}$ and $\tau=T_N=T$ in \eqref{eq:25} and \eqref{eq:26} yields that for each $t\in\T$,
\begin{equation}\label{eq:28}
\E\left[\left.\sup\limits_{s\in [t,T]}|\hat Y^{n,i}_s|^2+\int_t^T |\hat Z^{n,i}_s|^2 {\rm d}s\right|\F_t \right]\leq CM+{1\over 4}\E\left[\left.\int_t^T |\hat Z^{n-1,i}_s|^2{\rm d}s \right|\F_t\right]
\end{equation}
and
\begin{equation}\label{eq:29}
\begin{array}{lll}
&& \Dis \E\left[\left.\sup\limits_{s\in [t,T]}|\hat Y^{n,i}_{s}|^2+\int_t^T |\hat Z^{n,i}_s|^2 {\rm d}s\right|\F_t \right]\vspace{0.2cm}\\
&\leq & \Dis C\E\left[\left.\int_t^T u_s\rho\left(|\hat Y^{n,i}_s|^2\right){\rm d}s\right|\F_t\right]
+{1\over 4}\E\left[\left.\int_t^T |\hat Z^{n-1,i}_s|^2{\rm d}s \right|\F_t\right],
\end{array}
\end{equation}
where 
$$
\hat Y^{n,i}_s:=\hat y^{n,i}_{s\wedge T} {\bf 1}_{T_{N-1}\leq s},\ \ {\rm and}\ \ \hat Z^{n,i}_s:=\hat z^{n,i}_s {\bf 1}_{T_{N-1}\leq s\leq T}, \ s\in \T.
$$
Thus, thanks to \eqref{eq:28}, using a similar induction analysis to that in pages 802-803 in \cite{XiaoFan2017Stochastics} we can derive that for each $n\geq 1$,
\begin{equation}\label{eq:30}
\begin{array}{lll}
\Dis \sup\limits_{i\geq 1}\left(\E\left[\left.\sup\limits_{s\in [t,T]}|\hat Y^{n,i}_s|^2+\int_t^T |\hat Z^{n,i}_s|^2 {\rm d}s\right|\F_t \right]\right)&\leq &\Dis 2CM+{1\over 2}\E\left[\left.\int_0^T|z^1_s|^2{\rm d}s\right|\F_t\right]\vspace{0.1cm}\\
&=:& \Dis \overline M_t<+\infty,\ \ \ t\in\T.
\end{array}
\end{equation}
In view of \eqref{eq:30}, Fatou's lemma and the continuity and monotonicity of $\rho(\cdot)$, taking first the supremum with respect to $i$ and then the super limit with respect to $n$ on both sides of \eqref{eq:29} we can obtain that 
$$
\mu_t\leq C\E\left[\left.\int_t^T u_s\rho(\mu_s){\rm d}s\right|\F_t\right],\ \ t\in\T
$$
with
$$
\mu_t:=\overline{\lim\limits_{n\To\infty}}\sup\limits_{i\geq 1}|\hat Y^{n,i}_t|^2<+\infty.\vspace{0.2cm}
$$
Applying \cref{pro:Bihari} to the last inequality yields that 
$$
\mu_t=\lim\limits_{n\To\infty}\sup\limits_{i\geq 1}|\hat Y^{n,i}_t|^2=0,\ \ t\in\T.\vspace{0.1cm}
$$ 
Now, taking the supremum with respect to $i$ and the super limit with respect to $n$ on both sides of \eqref{eq:29} again leads to that for each $t\in\T$,
\begin{equation}\label{eq:31}
\lim\limits_{n\To\infty}\sup\limits_{i\geq 1} \E\left[\left.\sup\limits_{s\in [t,T]}|\hat y^{n,i}_{s\wedge T}{\bf 1}_{T_{N-1}\leq s}|^2+\int_t^T |\hat z^{n,i}_s{\bf 1}_{T_{N-1}\leq s\leq T}|^2 {\rm d}s\right|\F_t \right]=0.\vspace{0.3cm}
\end{equation}

Finally, in view of \eqref{eq:27} and \eqref{eq:30}, letting $\sigma=T_{N-2}$ and $\tau=T_{N-1}$ in \eqref{eq:25} and \eqref{eq:26} yields that for each $t\in\T$,\vspace{0.1cm}
\begin{equation}\label{eq:32}
\E\left[\left.\sup\limits_{s\in [t,T]}|\tilde Y^{n,i}_s|^2+\int_t^T |\tilde Z^{n,i}_s|^2 {\rm d}s\right|\F_t \right]\leq C(M+\overline M_t)+{1\over 4}\E\left[\left.\int_t^T |\tilde Z^{n-1,i}_s|^2{\rm d}s \right|\F_t\right]\vspace{0.2cm}
\end{equation}
and
\begin{equation}\label{eq:33}
\begin{array}{lll}
&&\Dis \E\left[\left.\sup\limits_{s\in [t,T]}|\tilde Y^{n,i}_{s}|^2+\int_t^T |\tilde Z^{n,i}_s|^2 {\rm d}s\right|\F_t \right]\vspace{0.2cm}\\
&\leq &\Dis C\E\left[\left.|\hat y^{n,i}_{T_{N-1}}|^2\right|\F_t\right]+C\E\left[\left.\int_t^T u_s\rho\left(|\tilde Y^{n,i}_s|^2\right){\rm d}s\right|\F_t\right]+{1\over 4}\E\left[\left.\int_t^T |\tilde Z^{n-1,i}_s|^2{\rm d}s \right|\F_t\right],
\end{array}
\end{equation}
where \vspace{0.1cm}
$$
\tilde Y^{n,i}_s:=\hat y^{n,i}_{s\wedge T_{N-1}} {\bf 1}_{T_{N-2}\leq s\leq T_{N-1}},\ \ {\rm and}\ \ \tilde Z^{n,i}_s:=\tilde z^{n,i}_s {\bf 1}_{T_{N-2}\leq s\leq T_{N-1}}, \ s\in \T.\vspace{0.2cm}
$$ 
Then, thanks to \eqref{eq:31}, by a same analysis as in the last paragraph we can use
\eqref{eq:32} and \eqref{eq:33} to deduce that for each $t\in\T$,\vspace{0.1cm}
\begin{equation}\label{eq:34}
\lim\limits_{n\To\infty}\sup\limits_{i\geq 1} \E\left[\left.\sup\limits_{s\in [t,T]}|\hat y^{n,i}_{s\wedge T_{N-1}}{\bf 1}_{T_{N-2}\leq s}|^2+\int_t^T |\hat z^{n,i}_s{\bf 1}_{T_{N-2}\leq s\leq T_{N-1}}|^2 {\rm d}s\right|\F_t \right]=0.\vspace{0.2cm}
\end{equation}
We proceed the above procedure to derive that for each $j=3,\cdots, N$ and $t\in\T$,\vspace{0.1cm}
\begin{equation}\label{eq:35}
\lim\limits_{n\To\infty}\sup\limits_{i\geq 1} \E\left[\left.\sup\limits_{s\in [t,T]}|\hat y^{n,i}_{s\wedge T_{N-j+1}}{\bf 1}_{T_{N-j}\leq s}|^2+\int_t^T |\hat z^{n,i}_s{\bf 1}_{T_{N-j}\leq s\leq T_{N-j+1}}|^2 {\rm d}s\right|\F_t \right]=0.\vspace{0.2cm}
\end{equation}
Thus, combining \eqref{eq:31}, \eqref{eq:34} and \eqref{eq:35} yields that\vspace{0.1cm}
\begin{equation}\label{eq:36}
\lim\limits_{n\To\infty}\sup\limits_{i\geq 1} \E\left[\sup\limits_{s\in [t,T]}|\hat y^{n,i}_s|^2+\int_t^T |\hat z^{n,i}_s|^2 {\rm d}s\bigg|\F_t\right]=0,\ \ t\in\T,\vspace{0.1cm}
\end{equation}
which means that $(y^n_\cdot,z^n_\cdot)_{n\geq 1}$ is a Cauchy sequence in the space $\s^2(0,T;\R^k)\times \M^2(0,T;\R^{k\times d})$. We denote the limit process by $(y_t,z_t)_{t\in\T}$ and take limit under the uniform convergence in probability in \eqref{eq:24} to see, in view of \eqref{eq:36} together with assumptions \ref{H:ContinuousInY}, \ref{H:GeneralizedGeneralGrowthInY} and \ref{H:LipschitzInZ}, that $(y_t,z_t)_{t\in\T}$ is the desired solution to BSDE$(\xi, T, g)$ in the space $\s^2(0,T;\R^k)\times \M^2(0,T;\R^{k\times d})$. The proof is then completed.\vspace{0.2cm}
\end{proof}

\begin{proof}[Proof of \cref{cor:MainResult}] Thanks to \cref{thm:MainResult}, we can use a similar argument to that in Theorem 12 of \cite{XiaoFan2017Stochastics} to obatin the desired conclusion. The details are omitted here.
\end{proof}

\vspace{0.2cm}



\setlength{\bibsep}{2pt}

\begin{thebibliography}{22}
\expandafter\ifx\csname natexlab\endcsname\relax\def\natexlab#1{#1}\fi
\expandafter\ifx\csname url\endcsname\relax
  \def\url#1{\texttt{#1}}\fi
\expandafter\ifx\csname urlprefix\endcsname\relax\def\urlprefix{URL }\fi

\bibitem[{Bender and Kohlmann(2000)}]{BenderKohlmann2000}
Bender, C., Kohlmann, M., 2000. {BSDE}s with stochastic lipschitz condition.
  {https://cofe.uni-konstanz. de/Papers/dp0008.pdf}.

\bibitem[{Bismut(1973)}]{Bismut1973JMAA}
Bismut, J., 1973. Conjugate convex functions in optimal stochastic control. J.
  Math. Anal. Appl. 44, 384--404.

\bibitem[{Briand and Confortola(2008)}]{BriandConfortola2008SPA}
Briand, P., Confortola, F., 2008. {BSDE}s with stochastic lipschitz condition
  and quadratic {PDE}s in {H}ilbert spaces. Stochastic Process. Appl. 18~(5),
  818--838.

\bibitem[{Briand et~al.(2003)Briand, Delyon, Hu, Pardoux, and
  Stoica}]{BriandDelyonHu2003SPA}
Briand, P., Delyon, B., Hu, Y., Pardoux, E., Stoica, L., 2003. {$L^p$}
  solutions of backward stochastic differential equations. Stochastic Process.
  Appl. 108~(1), 109--129.

\bibitem[{Chen and Wang(2000)}]{ChenWang2000JAMSA}
Chen, Z., Wang, B., 2000. Infinite time interval {BSDE}s and the convergence of
  $g$-martingales. Journal of the Australian Mathematical Society (Series A)
  69~(2), 187--211.

\bibitem[{Ding and Wu(1998)}]{DingWu1998SPA}
Ding, X., Wu, R., 1998. A new proof for comparison theorems for stochastic differential inequalities
with respect to semimartingales. Stochastic Process. Appl. 78~(2),155--171.


\bibitem[{El~Karoui and Huang(1997)}]{ElKarouiHuang1997NMIF}
El~Karoui, N., Huang, S., 1997. A general result of existence and uniqueness of
  backward stochastic differential equations. In: Backward Stochastic
  Differential Equations (Paris, 1995-1996). Vol. 364 of Pitman Research Notes
  in Mathematics Series. Longman, Harlow, London, UK, pp. 27--36.

\bibitem[{El~Karoui et~al.(1997)El~Karoui, Peng, and
  Quenez}]{ElKarouiPengQuenez1997MF}
El~Karoui, N., Peng, S., Quenez, M.~C., 1997. Backward stochastic differential
  equations in finance. Math. Finance 7~(1), 1--71.

\bibitem[{Fan(2015)}]{Fan2015JMAA}
Fan, S., 2015. ${L}^p$ solutions of multidimensional {BSDE}s with weak
  monotonicity and general growth generators. J. Math. Anal. Appl. 432,
  156--178.

\bibitem[{Fan(2016)}]{Fan2016SPA}
Fan, S., 2016. Bounded solutions, ${L}^{p}\ (p>1)$ solutions and ${L}^1$
  solutions for one-dimensional {BSDE}s under general assumptions. Stochastic
  Process. Appl. 126, 1511--1552.

\bibitem[{Fan and Jiang(2013)}]{FanJiang2013AMSE}
Fan, S., Jiang, L., 2013. Multidimensional {BSDE}s with weak monotonicity and
  general growth generators. Acta Mathematica Sinica, English Series 29~(10),
  1885--1906.

\bibitem[{Fan et~al.(2013)Fan, Jiang, and Davison}]{FanJiangDavison2013FMC}
Fan, S., Jiang, L., Davison, M., 2013. Existence and uniqueness result for
  multidimensional {BSDE}s with generators of {O}sgood type. Front. Math. China
  8~(4), 811--824.

\bibitem[{Kazamaki(1994)}]{Kazamaki1994book}
Kazamaki, N., 1994. Continuous exponential martingals and {BMO}. In: Lecture
  Notes in Math. Vol. 1579. Springer, Berlin.

\bibitem[{Liu et~al.(2019)Liu, Li, and Fan}]{LiuLiFan2019CAM}
Liu, Y., Li, D., Fan, S., 2019. ${L}^p\ (p > 1)$ solutions of {BSDE}s with
  generators satisfying some non-uniform conditions in $t$ and $\omega$. to
  appear in Chinese Ann. Math. B, arXiv:1603.00259v1 [math.PR].

\bibitem[{Luo and Fan(2018)}]{LuoFan2018SD}
Luo, H., Fan, S., 2018. Bounded solutions for general time interval {BSDE}s
  with quadratic growth coefficients and stochastic conditions. Stoch. Dynam.
  18~(5), Paper No. 1850034, 24pp.

\bibitem[{Mao(1995)}]{Mao1995SPA}
Mao, X., 1995. Adapted solutions of backward stochastic differential equations
  with non-{L}ipschitz coefficients. Stochastic Process. Appl. 58~(2),
  281--292.

\bibitem[{Morlais(2009)}]{Morlais2009FS}
Morlais, M.-A., 2009. Quadratic {BSDE}s driven by a continuous martingale and
  applications to the utility maximization problem. Finance Stoch. 13,
  121--150.

\bibitem[{Pardoux(1999)}]{Pardoux1999NADEC}
Pardoux, E., 1999. {BSDE}s, weak convergence and homogenization of semilinear
  {PDE}s. In: Clarke, F., Stern, R. (Eds.), Nonlinear Analysis, Differential
  Equations and Control. Kluwer Academic, New York, pp. 503--549.

\bibitem[{Pardoux and Peng(1990)}]{PardouxPeng1990SCL}
Pardoux, E., Peng, S., 1990. Adapted solution of a backward stochastic
  differential equation. Syst. Control Lett. 14~(1), 55--61.

\bibitem[{Pardoux and R\u{a}\d{s}scanu(2014)}]{PardouxRasscanu2014book}
Pardoux, E., R\u{a}\d{s}scanu, A., 2014. Stochastic differential equations,
  backward {SDE}s, partial differential equations. Springer, Cham.

\bibitem[{Peng(2004)}]{Peng2004LNM}
Peng, S., 2004. Nonlinear expectations, nonlinear evaluations and risk
  measures. In: Stochastic methods in finance. Vol. 1856 of Lecture Notes in
  Math. Springer, Berlin, pp. 165--253.

\bibitem[{Wang et~al.(2007)Wang, Ran, and Chen}]{WangRanChen2007JAMSA}
Wang, J., Ran, Q., Chen, Q., 2007. ${L}^p$ solutions of {BSDE}s with stochastic
  lipschitz condition. J. Appl. Math. Stoch. Anal. 2007, Paper No. 78196.

 \bibitem[{Wang and Fan(2019)}]{WangFan2019JIA}
Wang, X., Fan, S., 2019. A class of stochastic Gronwall’s inequality and its application. Journal of Inequalities and Applications 2018~(2018), Paper No. 336, 10pp.


\bibitem[{Xiao and Fan(2017)}]{XiaoFan2017Stochastics}
Xiao, L., Fan, S., 2017. General time interval {BSDE}s under the weak
  monotonicity condition and nonlinear decomposition for general
  g-supermartingales. Stochastics 89~(5), 786--816.

\end{thebibliography}

\end{document}